\documentclass[10pt,twocolumn]{article}

\usepackage[numbers,sort&compress]{natbib}

\newcommand{\PlotWidth}{0.96\linewidth}
\newcommand{\GapPlotWidth}{0.77\linewidth}
\newcommand{\ShortPlotHeight}{0.36\linewidth}
\newcommand{\MediumPlotHeight}{0.55\linewidth}
\newcommand{\TallPlotHeight}{0.82\linewidth}
\newcommand{\WidePlotWidth}{0.96\linewidth}
\newcommand{\WidePlotHeight}{0.54\linewidth}
\newcommand{\InputProfileFigureBegin}{\begin{figure}[t]}

\newcommand{\PaperTitle}{Sampled-data optimal control of linear fractional-order systems}
\newcommand{\PaperAuthor}{Pantelis Sopasakis}
\newcommand{\PaperEmail}{p.sopasakis@qub.ac.uk}
\newcommand{\PaperKeywords}{Sampled-data optimal control, Fractional-order systems, Bernstein polynomials}

\newcommand{\PaperAbstract}{%
This paper studies the problem of optimal control of Caputo
linear fractional-order systems of order $\alpha \in (0, 1]$
with continuous-time state constraints and input constraints.
The resulting problems are semi-infinite, since the state constraints
are imposed over the sampling intervals and not merely at the
sampling instants.
To address this, we derive a sampled-data representation of
the inter-sample trajectory and use Bernstein polynomials to
construct polytopic approximations of the continuous-time state
constraints.
Explicit bounds are obtained for the conservatism introduced by
this approximation.
By subdividing each sampling interval and applying the
Bernstein-based construction on every subinterval,
the approximation is made arbitrarily tight.
Numerical examples illustrate the tightness of the proposed
enclosures, the computational tractability of the proposed method,
and the trade-off between approximation accuracy
and computational complexity.
}

\usepackage{amsmath}
\usepackage{amssymb}
\usepackage{amsthm}
\usepackage{graphicx}
\usepackage{xcolor}
\usepackage{mathtools}
\usepackage{hhline}
\usepackage{psfrag}
\usepackage{hyphenat}
\usepackage{nicefrac}
\usepackage{framed}
\usepackage{units}

\definecolor{yellow}{rgb}{1,1,0}

\newcommand{\R}{{\rm I\!R}}
\newcommand{\E}{{\rm I\!E}}
\newcommand{\F}{\mathcal{F}}
\newcommand{\smallmat}[1]{\left[\begin{smallmatrix}#1\end{smallmatrix}\right]}

\newcommand{\conv}{\operatorname{co}}
\newcommand{\minimize}{\operatorname*{Minimize}}

\newcommand{\Var}{\operatorname{Var}}
\newcommand{\gap}{\operatorname{gap}}

\newcommand{\dfn}{\mathrel{\mathop:}=}
\newcommand{\dd}{\mathrm{d}}

\usepackage{tikz}
\usetikzlibrary{external}
\usepackage{pgfplots}
\usepgfplotslibrary{fillbetween}
\pgfplotsset{compat=1.18}

\providecommand{\PlotWidth}{0.55\linewidth}
\providecommand{\GapPlotWidth}{0.55\linewidth}
\providecommand{\ShortPlotHeight}{0.20\linewidth}
\providecommand{\MediumPlotHeight}{0.30\linewidth}
\providecommand{\TallPlotHeight}{0.45\linewidth}
\providecommand{\WidePlotWidth}{0.65\linewidth}
\providecommand{\WidePlotHeight}{0.35\linewidth}
\providecommand{\InputProfileFigureBegin}{\begin{figure}}

\graphicspath{{figures/}}

\theoremstyle{plain}
\newtheorem{theorem}{Theorem}
\newtheorem{lemma}{Lemma}
\newtheorem{proposition}{Proposition}
\theoremstyle{remark}
\newtheorem{remark}{Remark}

\usepackage{microtype}
\usepackage[hidelinks]{hyperref}
\hypersetup{
  pdftitle={\PaperTitle},
  pdfauthor={\PaperAuthor},
  pdfkeywords={\PaperKeywords}
}

\title{\PaperTitle}
\author{%
  \PaperAuthor\\
  \small School of Electronics, Electrical Engineering, and Computer Science (EEECS)\\
  \small Queen's University Belfast, Belfast BT9 5AG, United Kingdom\\
  \small \texttt{\PaperEmail}%
}
\date{}

\begin{document}
\maketitle

\begin{abstract}
\PaperAbstract
\end{abstract}

{\raggedright\noindent\textbf{Keywords:} \PaperKeywords\par}
\medskip

\section{Introduction}

\subsection{Motivation \& background}
% WHY FRACTIONAL-ORDER SYSTEMS ARE WORTH STUDYING
% (APPLICATIONS / MOTIVATION)
Fractional-order systems are dynamical systems involving 
derivatives of non-integer order, which, unlike regular 
derivatives, are non-local operators \citep{Monje2010}.
Such models are particularly well suited for 
describing memory 
effects that are often difficult to capture 
with classical integer-order dynamics.
Fractional-order derivatives arise also naturally 
from the study of anomalous diffusion \citep{Zacher2019}. 
Models with fractional-order derivatives have found 
applications in a wide range of domains, including 
pharmacokinetics \citep{Sopasakis+2017}, 
energy storage systems \citep{ZouLei+2018}, 
epidemiology \citep{Khondoker+2021,Ding+2012}, 
and the modelling of nerve cells \citep{Langlands2009}.

% RECENT RESEARCH RESULTS IN FRACTIONAL OPTIMAL CONTROL
The theory of optimal control for fractional-order 
dynamical systems has witnessed significant progress 
in recent years.
In the linear-quadratic setting, the role played by the Riccati 
differential equation in the classical case is taken by a 
Fredholm-type integral equation \citep{Gomoyunov2025}.
A Pontryagin-type maximum principle has been developed 
for optimal control problems with fractional-order 
dynamics and state and input constraints \citep{Moon2025}.
Alongside, numerical methods have been developed for 
solving such problems with linear \citep{Malmir2024} and 
nonlinear dynamics \citep{LiuGong+2021}.
Discretization methods based on the Gr{\"u}nwald-Letnikov 
derivative have been proposed to facilitate the solution 
of optimal control problems, however these disregard the 
discretization error \citep{YaghiniBagheri+2024,SopasakisSarimveis2017}.

% WHY DO WE NEED SAMPLED-DATA OPTIMAL CONTROL?
% And what are the challenges?
In many applications, control actions are generated 
digitally and applied through a zero-order hold element.
As a result, imposing state constraints over the entire 
prediction horizon leads to a semi-infinite optimal control 
problem, since the constraints must be enforced at all 
inter-sample times and not only at the sampling instants. This 
makes the problem substantially more challenging than its 
discrete-time counterpart.

% SAMPLED-DATA METHODS HAVE BEEN STUDIES FOR INTEGER-ORDER SYSTEMS
Sampled-data optimal control for integer-order constrained 
systems has received considerable attention.
In particular, conditions for recursive feasibility and asymptotic 
stability are derived in \citep{EsterhuizenWorthmann+2021} 
and \citep{SopPatSar13},
enabling the design of model predictive controllers
with guaranteed satisfaction of the state constraints in 
continuous time.
For constrained input-affine sampled-data systems with bounded 
additive disturbances, Cortez et al.~\cite{CortezDrgona+2022} propose a
custom sampled-data control barrier function. 
Related ideas have also been developed for constrained nonlinear 
systems, where Ding et al.~\cite{DingBhaveDeka2026} combine control barrier
functions with backstepping to compute reach-avoid sets and design 
model predictive controllers with guaranteed recursive feasibility.

% WHAT IS THE GAP IN THE LITERATURE?
% Why can't we apply existing methods to fractional-order systems?
Sampled-data approaches for integer-order systems, however, 
are not directly applicable to fractional-order systems. 
A key reason is that they rely on the Markov property where future 
states are a function only of the current state and input. 
Fractional-order systems, by contrast, are inherently nonlocal in time, 
so their evolution depends on the past trajectory rather than 
on the current state \citep{Monje2010}.
Another challenge is that the trajectories of fractional-order 
systems are described by Mittag-Leffler functions with irregular
behavior at the sampling time instants.
Sampled-data fractional-order optimal control formulations 
constitute a largely unexplored area. One of the few exceptions is 
the work of \cite{LiWenLiu2021}, which focuses on 
fractional-order integrators and does not consider state 
constraints.

% PROBLEM STATEMENT OF THIS PAPER
% What is the research question? (high level)
In this paper, we consider optimal control of linear 
fractional-order systems with the Caputo fractional derivative
under zero-order-hold actuation 
and continuous-time state and input constraints. 
This leads to a semi-infinite optimization problem, 
since the state constraints must hold over continuous-time
sampling intervals. 
Our approach is to construct a tractable finite-dimensional 
approximation of the state constraints and
quantify the resulting conservatism, 
and study the trade-off between 
approximation tightness and computational complexity.

% OUR APPROACH
Our approach uses Bernstein polynomials 
\citep{Phillips2003} and a Lipschitz-type property 
to construct safe polytopic overapproximations 
of the inter-sample trajectories of sampled-data 
fractional-order systems.
This is somewhat similar in spirit to the approach of 
Allamaa et al.~\cite{AllamaaPatrinos+2023} where Bernstein polynomials
are used to compute a safety envelope to enforce 
continuous-time state/input constraints. 
Furthermore, we quantify the approximation error and show 
that by subdividing the inter-sample intervals and applying 
the Bernstein-based overapproximation to each subinterval, 
we can obtain an arbitrarily tight approximation.
We illustrate numerically the resulting trade-off between 
tightness and computational complexity and demonstrate
that the resulting optimal 
control problems come with a large number of convex 
constraints, but can be solved very efficiently
using modern QP solvers such as OSQP~\citep{osqp2020}.

\subsection{Notation and preliminaries}
For $\epsilon>0$ we denote by 
$\mathcal{B}_\epsilon=\{x\in\R^n:\|x\|\leq
\epsilon\}$ the closed ball of $\R^n$ with radius $\epsilon$.  
We use the notation $a \wedge b = \min\{a, b\}$,
$(a)_+ = \max\{0, a\}$.
For a normed space $X$, we say that a function $f:\R\to X$
is $L$-smooth if $f$ is differentiable and $f'$ is $L$-Lipschitz, 
i.e., $\|f'(x)-f'(y)\| \leq L |x-y|$ for $x, y\in \R$.
The support function of a nonempty set $C \subseteq \R^n$ 
is defined as $\delta^*_C(y) = \sup_{x\in C}\langle y, x\rangle$.
The inner, outer, and Painlev\'e-Kuratowski limits of sequences 
of sets are defined as in \citep[Chap.~4]{RocWet98}.
The Minkowski sum of sets is denoted by $\oplus$.
The convex hull of a set $A\subseteq X$ is denoted by 
$\conv A$; this is the smallest convex set that contains $A$.
Lastly, the two-parameter Mittag-Leffler function is 
\begin{equation}
    E_{\alpha,  \beta}(z)
    {}={}
    \sum_{j=0}^{\infty}
    \frac{z^j}{\Gamma(\alpha j + \beta)},
\end{equation}
and the (one-parameter) Mittag-Leffler function is 
$E_{\alpha}(z) = E_{\alpha, 1}(z)$.
Note that this function generalizes the exponential; 
indeed, $E_1(z) = \exp(z)$.

\section{Fractional-order Systems}
In this section we recall the solution formula for linear fractional-order 
systems and specialize it to sampled-data systems with a zero-order hold. 
The Caputo derivative of order  $\alpha\in (0,1)$
of an almost everywhere differentiable function $x:[0, 1]\to\R^n$
is defined as \citep{Monje2010}
\begin{equation}
    \mathrm{D}^\alpha x(t)
    {}={}
    \frac{1}{\Gamma(1-\alpha)}
    \int_0^t (t - s)^{-\alpha}\dot{x}(s)\dd s,
\end{equation}
where $\Gamma$ is the Euler gamma function, 
provided the integral exists. For $\alpha=1$, 
$\mathrm{D}^\alpha x(t) \coloneqq \frac{\dd}{\dd t}x(t)$.

Consider the following linear fractional system%
\begin{align}\label{eq:flti}
    \mathrm{D}^\alpha x(t) = Ax(t) + Bu(t),
\end{align}
where $x\in\R^n$ is the system state, $u\in\R^m$ is 
the input variable and $A$ and $B$ are matrices of appropriate
dimensions. 
Note that calling $x$ a state is in fact a misnomer, because this 
system does not have the Markov property.
Given the initial condition $x(0)=x_0$, the solution
of~\eqref{eq:flti} is~\citep{Kac10}
\begin{align}
    x(t) = \mu(t)x_0 + \int_0^t \Phi(t-\tau)Bu(\tau)\dd\tau,
    \label{eq:system_response_ct}
\end{align}
where $\mu(t) = E_\alpha(At^\alpha)$  and
$\Phi:(0, \infty)\to \R^{n\times n}$ is the function
\begin{align}
    \Phi(t)=\sum_{j=0}^{\infty}\frac{A^jt^{(j+1)\alpha -1}}{\Gamma((j+1)\alpha)}.
    \label{eq:Phi}
\end{align}

Let $(u_k)_{k}$ be a sequence of control 
actions which are applied to the continuous-time system through
a zero-order hold element, i.e., $u(t)=u_k$ for 
$t\in [kh, (k+1)h)$, where $h$ is the sampling period. 
Then, the system response of \eqref{eq:system_response_ct} becomes
\begin{equation}
    x(t) 
    {}={} 
    \mu(t)x_0 
    {}+{} 
    \sum_{i=0}^{\lfloor t/h \rfloor}\sigma_i(t) B u_i
    \label{eq:solution:1}
\end{equation}
and the functions $\sigma_i:\R_+ \to \R^{n\times n}$ for 
$i=0,\ldots, \lfloor t/h \rfloor$ are defined as 
\begin{equation}
    \sigma_i(t) = \int_{ih}^{t \wedge (i+1)h}\Phi(t-\tau)\dd\tau,
    \label{eq:sigma_i:def}
\end{equation}
and $\sigma_i(t) = 0$ for $i > \lfloor t/h \rfloor$.
The $\sigma_i$ can be written in a compact and convenient way as we show next.

% -----------------------------------
% PROPOSITION 1
% REPRESENTATION OF sigma_i(t)
% -----------------------------------
% \begin{framed}
\begin{proposition}\label{prop:sigma_i_G}
    The functions $\sigma_i$ are given by 
    % There is also this representation, which is not so convenient...
    % \begin{equation}
    %     \sigma_i(t) 
    %     {}={} 
    %     \sum_{j=0}^{\infty}
    %         \frac{A^j  [(t-ih)^{(j+1)\alpha} - (t - s_i(t))^{(j+1)\alpha}]}{\Gamma((j+1)\alpha + 1)},
    % \end{equation}
    \begin{equation}
        \sigma_i(t) 
        {}={}
        G((t-ih)_+^\alpha)
        - 
        G((t-(i+1)h)_+^\alpha),
    \end{equation}
    for $t \geq 0$, where $G(\rho) = \rho E_{\alpha,  \alpha+1}(A\rho)$,
    for $\rho \geq 0$.
\end{proposition}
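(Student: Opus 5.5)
The plan is to integrate the series \eqref{eq:Phi} term by term. The first step is to identify $G$ as an antiderivative of $\Phi$ in the variable $s$, after the substitution $\rho = s^\alpha$. For $s>0$ set $F(s) \dfn G(s^\alpha) = \sum_{j\geq 0} A^j s^{(j+1)\alpha}/\Gamma(\alpha j+\alpha+1)$. Differentiating termwise and using $\Gamma(z+1)=z\Gamma(z)$ gives
\[
  F'(s) = \sum_{j\ge0} \frac{(j+1)\alpha\, A^j s^{(j+1)\alpha-1}}{\Gamma((j+1)\alpha+1)} = \sum_{j\ge0}\frac{A^j s^{(j+1)\alpha-1}}{\Gamma((j+1)\alpha)} = \Phi(s).
\]
In addition, $F(s)\to 0$ as $s\downarrow 0$, because every term carries the factor $s^{\alpha}$ with $\alpha>0$. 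This yields $\int_0^{r}\Phi(s)\,\dd s = G(r^\alpha)$ for $r\ge 0$, with the convention $G(0)=0$.

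The main technical obstacle is justifying this termwise differentiation and integration, since $\Phi$ has an integrable singularity $s^{\alpha-1}$ at $0$. I would handle it as follows. On any interval $[\varepsilon, T]$ the differentiated series converges uniformly, because its terms are bounded in norm by $\|A\|^j T^{(j+1)\alpha-1}\varepsilon^{-1}/\Gamma((j+1)\alpha)$ (or similar), and this is summable since $\Gamma$ grows superexponentially, exactly as for the Mittag-Leffler series. Hence $F'=\Phi$ on $(0,\infty)$. Then $\int_\varepsilon^r \Phi = F(r)-F(\varepsilon)$, and letting $\varepsilon\downarrow0$ with continuity of $F$ at $0$ gives the claim. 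Alternatively, one can integrate termwise directly and justify it by monotone convergence applied to the norm-majorant series $\sum_j \|A\|^j s^{(j+1)\alpha-1}/\Gamma((j+1)\alpha)$, which is integrable on $[0,r]$.

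Next I would compute $\sigma_i$ from \eqref{eq:sigma_i:def} by the substitution $s = t-\tau$:
\[
  \sigma_i(t) = \int_{t-(t\wedge(i+1)h)}^{t-ih}\Phi(s)\,\dd s = G\big((t-ih)^\alpha\big) - G\big((t-(t\wedge(i+1)h))^\alpha\big).
\]
This holds for $t\ge ih$, so that the upper limit $t-ih\ge 0$. Since $t-(t\wedge(i+1)h) = (t-(i+1)h)_+$ and $t-ih=(t-ih)_+$ in this range, this is the stated formula.

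It remains to check the range $0\le t< ih$, where $\sigma_i(t)=0$ by the convention $\sigma_i = 0$ for $i>\lfloor t/h\rfloor$. Here both positive parts vanish and $G(0)=0$, so the formula again gives $0$. The boundary case $i=\lfloor t/h\rfloor$ with $t<(i+1)h$ is already covered by the first case, giving $G((t-ih)^\alpha)-G(0)$. Finally, when $\alpha=1$ the same argument applies: $G(\rho)=\rho E_{1,2}(A\rho)$ and $\Phi(s)=e^{As}$.
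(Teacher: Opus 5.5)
Your proposal is correct and follows essentially the same route as the paper: you set $F(r)=G(r^\alpha)$, obtain $F'=\Phi$ on $(0,\infty)$ by termwise differentiation justified with the Weierstrass M-test on compact subintervals, extend the fundamental theorem of calculus down to $0$ by continuity of $F$, and then read off $\sigma_i$ via $t-t\wedge(i+1)h=(t-(i+1)h)_+$, with the positive parts giving $\sigma_i(t)=0$ for $t<ih$. The only blemish is your majorant $T^{(j+1)\alpha-1}\varepsilon^{-1}$, which is not valid for all $0<\varepsilon<T$; use $r^{(j+1)\alpha-1}=r^{(j+1)\alpha}/r\leq T^{(j+1)\alpha}/\varepsilon$ on $[\varepsilon,T]$ instead, as the paper does.
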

% \end{framed}
\begin{proof}
Let us define $F(r) = G(r^\alpha)$ for $r\geq 0$.
By the definition of $G$, for $\rho\geq 0$ we have 
\begin{equation}
    F(r)
    {}={}
    \sum_{j=0}^{\infty}
    \underbracket[0.5pt]{\frac{A^j r^{(j+1)\alpha}}{\Gamma((j+1)\alpha+1)}}_{F_j(r)}.
    \label{eq:F_series:def}
\end{equation}

We will show that $F'(r) = \Phi(r)$.
Consider an interval $[a, b]\subset(0,\infty)$.
Following \citep[Cor. 3.7.3]{TerenceTaoAnalysisII},
note that $F_j$ are continuously differentiable and 
the series in \eqref{eq:F_series:def} 
converges; we need to show that 
$\sum_{j=0}^{\infty} \sup_{a\leq r\leq b}\|F_j'(r)\| < \infty$.
Indeed, for $r\in[a,b]$,
\begin{align}    
    \| F_j'(r)\|
    {}={}&
    \left\|
        \frac{A^j (j+1)\alpha r^{(j+1)\alpha-1}}{\Gamma((j+1)\alpha+1)}
    \right\|
    \notag\\
    {}={}&
    \left\|
        \frac{A^j r^{(j+1)\alpha-1}}{\Gamma((j+1)\alpha)}
    \right\|
    {}\leq{}
    \frac{\|A\|^j b^{(j+1)\alpha}}{a \Gamma((j+1)\alpha)},
\end{align}
where we used the fact that $r^{(j+1)\alpha-1} = r^{(j+1)\alpha}/r \leq b^{(j+1)\alpha}/a$.
Summability follows from the Weierstrass M-test.
As a result,
\begin{align}
    F'(r)
    {}={}&
    \sum_{j=0}^{\infty}
    F_j'(r)
    {}={}
    \sum_{j=0}^{\infty}    
    \frac{A^j r^{(j+1)\alpha-1}}
    {\Gamma((j+1)\alpha)}
    =
    \Phi(r).
    \label{eq:F_antiderivative_Phi}
\end{align}
Since $[a,b]\subset(0,\infty)$ is arbitrary, this equality holds 
for all $r>0$.
By the fundamental theorem of calculus 
$\int_a^b \Phi(r) \dd r = F(b) - F(a)$ for $0 < a \leq b$ 
and by the continuity of $G$, we have that $F$ is continuous on 
$[0, \infty)$, so this holds for $0 \leq a \leq b$.
From \eqref{eq:F_antiderivative_Phi}, 
$\tfrac{\dd}{\dd\tau}F(t-\tau) = -\Phi(t - \tau)$,
so 
\begin{equation}
    \sigma_i(t) = -F(t-\tau)\big|_{\tau=ih}^{\tau=t\wedge (i+1)h}.
\end{equation}
Using $t - t \wedge (i+1)h = (t - (i+1)h)_+$, 
\begin{equation}
    \sigma_i(t)
    =
    F((t-ih)_+)-F((t-(i+1)h)_+).
\end{equation}
Here the plus operator in the first term, is used 
to establish $\sigma_i(t) = 0$ for 
$i > \lfloor t/h \rfloor$.
Since $F(r)=G(r^\alpha)$ for $r\geq 0$, the assertion is 
proven.
\end{proof}

Our goal is to solve the following optimal control problem
with horizon $N$
\begin{subequations}
    \begin{align}
        \minimize_{u_0, \ldots, u_{N-1}}
        \;{}&{}
        J_N(x_0, u_0, \ldots, u_{N-1})
        \\
        \text{subj. to:}\;
        {}&{}
        \mathrm{D}^\alpha x(t) {=} Ax(t) + Bu(t), t{\in} [0, Nh]
        \\
        {}&{}
        x(t) \in \mathcal{X}, t\in [0, Nh]
        \label{eq:ocp:state_constraints}
        \\
        {}&{}
        u_k \in \mathcal{U}, k=0, \ldots, N-1,
        \label{eq:ocp:input_constraints}
    \end{align}
\end{subequations}
where $\mathcal{U}\subseteq\R^m$ is a nonempty 
closed convex set and 
$\mathcal{X} \dfn \{x\in \R^n: Hx \leq b\}$,
with $H\in\R^{n_c\times n}$ and $b\in\R^{n_c}$,
is a nonempty polyhedral set.
Moreover, $J_N$ is a continuous cost function.
This is a challenging problem because the state 
constraints in \eqref{eq:ocp:state_constraints}, 
albeit convex, are semi-infinite, that is, we have an 
infinite number of constraints (indexed by $t$).
Next, we use Bernstein polynomials to construct tight 
polytopic overapproximations of the state constraints.

\section{Constraint handling}
\subsection{Bernstein enclosures}
Given a function $f:[0, 1]\to X$, where $X$ is a normed space 
(e.g., $\R^n$ or $\R^{m\times n}$) we want to determine a polytope 
that overapproximates $f([0, 1])$. 
By taking points $(t_i)_i \subseteq [0, 1]$ the set 
$\conv \{f(t_i)\}_i$ is naturally an inner approximation 
in the sense $\conv \{f(t_i)\}_i \subseteq \conv f([0, 1])$.
Using Bernstein polynomials \citep{Phillips2003} we will show that under certain 
regularity conditions on $f$ we can produce an outer approximation 
by appropriately inflating $\conv \{f(t_i)\}_i$. 

The Bernstein polynomial of degree $q$ of $f$ is given by 
\begin{equation}
    B^qf(x) = \sum_{\nu=0}^{q} b_{\nu, q}(x) f(\nicefrac{\nu}{q}),
\end{equation}
where $b_{\nu, q}$ are the Bernstein basis polynomials of 
degree $q$ given by 
\begin{equation}
    b_{\nu, q}(x) 
    {}={} 
    {q \choose \nu}
    x^\nu
    (1-x)^{q-\nu},
\end{equation}
for $\nu=0,\ldots, q$; these satisfy 
$\sum_{\nu=0}^{q} b_{\nu, q}(x) = 1$ and 
$b_{\nu, q}\geq 0$, so $B^qf(x)$ is a convex combination 
of the values $f(\nu/q)$, for $\nu=0,\ldots, q$.
We will now introduce polytopic overapproximations
of $L$-smooth function images of the form $f([0, 1])$.
We call these approximations \emph{Bernstein enclosures}.

% -----------------------------------
% LEMMA 2
% Bernstein enclosures
% -----------------------------------
% \begin{framed}
\begin{lemma}[Bernstein enclosures]\label{lem:bernstein_bound}
    Let $X$ be a normed space.
    Suppose $f:[0, 1]\to X$ is $L$-smooth.
    Then,
    \begin{equation}
    f([0,1]) \subseteq \conv\{f(\nu/q)\}_{\nu=0}^{q} \oplus \mathcal{B}_{\eta},
    \label{eq:bernstein_enclosure}
    \end{equation}
    where $\eta =  \nicefrac{L}{8q}$.    
    If $f\in C^2([0, 1])$, then one may take 
    $L = \sup_{t\in [0,1]}\|f''(t)\|$.
\end{lemma}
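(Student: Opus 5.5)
The plan is to show that for every $x\in[0,1]$ the point $f(x)$ lies within distance $\eta$ of $B^qf(x)$. Since $B^qf(x)=\sum_{\nu=0}^q b_{\nu,q}(x) f(\nu/q)$ is a convex combination of the nodes, it belongs to $\conv\{f(\nu/q)\}_{\nu=0}^q$. We can then write
\[
f(x) = B^qf(x) + \bigl(f(x)-B^qf(x)\bigr),
\]
with the second term in $\mathcal{B}_\eta$, which yields \eqref{eq:bernstein_enclosure}. So everything reduces to the uniform estimate
\[
\|f(x)-B^qf(x)\|\leq \frac{L}{8q}\quad\text{for all } x\in[0,1].
\]

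To obtain this, fix $x$ and use $\sum_\nu b_{\nu,q}(x)=1$ together with the first-moment identity $\sum_\nu b_{\nu,q}(x)(\nu/q - x)=0$. These give
\[
B^qf(x)-f(x)=\sum_{\nu=0}^q b_{\nu,q}(x)\bigl[f(\nu/q)-f(x)-f'(x)(\nu/q-x)\bigr].
\]
The subtracted linear term sums to zero, and $f'(x)\in X$ is scaled by a real number, so the identity holds in a general normed space.

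Next, each bracket is bounded by the standard descent-lemma estimate for $L$-smooth maps. Write
\[
f(y)-f(x)-f'(x)(y-x)=\int_0^1 \bigl(f'(x+s(y-x))-f'(x)\bigr)(y-x)\,\dd s .
\]
Its norm is at most $\int_0^1 Ls|y-x|^2\,\dd s = \tfrac{L}{2}|y-x|^2$.

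One subtlety is that in a general normed space (not necessarily complete) the Bochner integral may not be directly available. In that case I would instead apply a continuous linear functional $\phi$ with $\|\phi\|=1$ attaining the norm, by Hahn--Banach. Then $\phi\circ f$ is a real $L$-smooth function, and the scalar Taylor estimate gives the same bound. This is the only genuinely delicate point; everything else is routine.

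Combining the steps with the triangle inequality, and using the second-moment identity $\sum_\nu b_{\nu,q}(x)(\nu/q-x)^2 = x(1-x)/q$, gives
\[
\|B^qf(x)-f(x)\|\leq \frac{L}{2}\cdot\frac{x(1-x)}{q}\leq \frac{L}{8q},
\]
since $x(1-x)\leq 1/4$ on $[0,1]$.

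For the final claim, suppose $f\in C^2([0,1])$. By the mean value inequality (again via functionals if needed), $\|f'(x)-f'(y)\|\leq \sup_t\|f''(t)\|\,|x-y|$. So $f$ is $L$-smooth with $L=\sup_{t\in[0,1]}\|f''(t)\|$, and the first part applies.
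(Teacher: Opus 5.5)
Your proposal is correct and follows essentially the same route as the paper: expand each node value around $f(x)$ with a first-order Taylor remainder, cancel the linear term using $\sum_{\nu} b_{\nu,q}(x)(\nu/q - x)=0$, bound each remainder by $\tfrac{L}{2}(\nu/q-x)^2$, and use the binomial second moment $x(1-x)/q\le 1/(4q)$. The paper proves the $C^2$ bound by integrating $f''$ and simply asserts the remainder bound. Your Hahn--Banach argument and mean-value-inequality justification treat a general (possibly incomplete) normed space more carefully. You also state the convex-hull-plus-ball step explicitly, which the paper leaves implicit; the underlying argument is the same.
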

% \end{framed}

\begin{proof}
    Suppose $f$ is $L$-smooth.
    For $x, y \in [0, 1]$, 
    \begin{equation}
        f(y) = f(x) + f'(x)(y-x) + R_x(y),
        \label{eq:pf:f_linearisation_residual}
    \end{equation}
    and the residual, $R_x$, satisfies
    $
        \|R_x(y)\| \leq \tfrac{L}{2}(y-x)^2,
    $
    and substituting $y = \nu/q$ in we have
    \begin{equation}
        \left\|R_x\left(\tfrac{\nu}{q}\right)\right\| 
        {}\leq{}
        \tfrac{L}{2}\left(\tfrac{\nu}{q}-x\right)^2.
        \label{eq:pf:resid_bound:kq}
    \end{equation}
    We substitute $f$ from \eqref{eq:pf:f_linearisation_residual} into the 
    Bernstein polynomial expansion 
    \begin{align*}
        B_q f(x)
        {}={}&
        \sum_{\nu=0}^{q}b_{\nu, q}(x)
        \bigl[f(x)+f'(x)(\nicefrac{\nu}{q}-x)
        \\[-2pt]
        &\hspace{7em}{}+ R_x(\nicefrac{\nu}{q})\bigr].
    \end{align*}
    Note that $b_{\nu,q}$ satisfy 
    $\sum_{\nu=0}^{q}b_{\nu,q}(x) \nicefrac{\nu}{q} = x$
    \citep[Sec. 7.1]{Phillips2003},
    so $B_q f(x)$ becomes
    \begin{equation}
        B_q f(x)
        {=}
        f(x)+\sum_{\nu=0}^{q}b_{\nu, q}(x)R_x(\nicefrac{\nu}{q}).
    \end{equation}
    As a result,
    \begin{align}
        \|B_qf(x) - f(x)\| 
        {}\leq{}&
        \sum_{\nu=0}^{q} b_{\nu, q}(x)\;\|R_x(\nicefrac{\nu}{q})\|
        \notag\\
        {}\overset{\eqref{eq:pf:resid_bound:kq}}{\leq}{}&
        \frac{L}{2}\sum_{\nu=0}^{q}b_{\nu, q}(x)\left(\nicefrac{\nu}{q} - x\right)^2.
        \label{eq:pf:bernstein_bound:1}
    \end{align}
    Interpreting $b_{\nu,q}(x)$ as the probabilities 
    of a binomial random variable $K\sim\mathrm{Binom}(q,x)$, we have 
    \begin{align}
        \sum_{\nu=0}^{q}b_{\nu, q}(x)\left(\nicefrac{\nu}{q} - x\right)^2
        {}={}&
        \E\left[\left(\tfrac{K}{q}-x\right)^2\right]
        \notag\\
        {}={}&
        \tfrac{1}{q^2}\Var[K]
        {}={}
        \frac{x(1-x)}{q}.
    \end{align}
    From \eqref{eq:pf:bernstein_bound:1},
    $\|B_q f(x) - f(x)\|  \leq \tfrac{Lx(1-x)}{2q}$.
    Since 
    $0 \leq x(1-x) \leq \tfrac{1}{4}$ for $0\leq x\leq 1$, 
    the result of \eqref{eq:bernstein_enclosure} follows.

    Lastly, if $f\in C^2([0, 1])$, then for $x, y\in [0, 1]$,
    \begin{equation}
        f'(y)-f'(x)=\int_x^y f''(s)\dd s,
    \end{equation}
    so
    $\|f'(y)-f'(x)\|
        \leq
        \sup_{s\in[0,1]}\|f''(s)\|\,|y-x|,$
    so one may take
    $L=\sup_{s\in[0,1]}\|f''(s)\|$.
\end{proof}

\subsection{Enclosures of state trajectories}%
\label{sec:polytopic_overapproximations}
In this section we use Proposition \ref{prop:sigma_i_G} and 
Lemma \ref{lem:bernstein_bound} to determine polytopic 
overapproximations of the state trajectories.
We will also address the challenge that $\sigma$ is not $L$-smooth.

% 1. Problem statement & objective
The solution in \eqref{eq:solution:1} for $t\in I_l \dfn [lh, (l+1)h]$
can be written as 
$x(t) = \Phi_l(t) z_l$, where $z_l=(x_0, u_0, \ldots, u_l)$, and
$\Phi_l(t) = [\mu(t) ~ \sigma_0(t)B ~ \ldots ~ \sigma_l(t)B]$. 
The system is subject to polyhedral state constraints, 
$x(t) \in \mathcal{X}$, over each interval $t\in I_l$.
The state constraints are convex but semi-infinite, so we 
aim to determine a polytope $\mathcal{Q}_l$ such that 
$H\Phi_l(t) \in \mathcal{Q}_l$ for all $t\in I_l$.
However, Lemma \ref{lem:bernstein_bound} cannot be applied 
directly because $\Phi_l(t)$ is not $L$-smooth for $0 < \alpha < 1$.
Indeed, for $t\in I_l$ it is $\sigma_l(t) = G((t-lh)^\alpha)$ and from the proof 
of Proposition \ref{prop:sigma_i_G}, 
$\frac{\dd}{\dd r}G(r^\alpha) = \Phi(r)= \frac{1}{\Gamma(\alpha)}r^{\alpha - 1} I + O(r^{2\alpha - 1})$---cf. 
Equation \eqref{eq:Phi}---therefore, $\|\Phi(r)\|\to\infty$
as $r\to 0^+$, that is, $G(r^\alpha)$ is not an $L$-smooth function of $r$. 

To overcome this issue we exploit the fact that $G$ is 
analytic. We introduce the local time variable $\tau$ with $t = h(l + \tau)$,
$\tau\in[0, 1]$ and 
observe that $\sigma_l(t) = G(h^\alpha\tau^\alpha)$ and 
$\sigma_{l-1}(t) = G(h^\alpha(1 + \tau)^\alpha) - G(h^\alpha\tau^\alpha)$.
The solution in \eqref{eq:solution:1} for $t\in I_l$ and $l\geq 1$
is then written as 
\begin{align}
    x(t)
    {}={}&
    \mu(t)x_0
    + \sum_{i=0}^{l-2}\sigma_i(t)Bu_i
    \notag\\
    &{}+ G(h^\alpha(1 + \tau)^\alpha)Bu_{l-1}
    \notag\\
    &{}+ G(h^\alpha\tau^\alpha)B\Delta u_l.
\end{align}
where $\Delta u_l = u_l - u_{l-1}$.

Note again that the function $\tau \mapsto G(h^\alpha\tau^\alpha)B$ is not $L$-smooth as
its derivative is $G'(h^\alpha\tau^\alpha)h^\alpha \alpha\tau^{\alpha-1}$.
Our approach is to isolate this function and 
treat it separately.
More specifically, we can define the function $S_l:[0,1]\to \R^{n\times d_l}$,
where $d_l = n + ml$ as 
\begin{multline}
    S_l(\tau)
    {=}
    \big[
    \mu(t) ~~ \sigma_0(t)B
    ~~ \sigma_1(t)B
    \\
    \cdots \sigma_{l-1}(t)B    
    ~~ G(h^\alpha(1 + \tau)^\alpha)B
    \big],
\end{multline}
so that 
\begin{equation}
    x(t) = S_l(\tau) z_{l-1} +   G(h^\alpha\tau^\alpha)B \Delta u_l,
    \label{eq:key-system-representation}
\end{equation}
for $t\in I_l$, where $z_{l-1}=(x_0, u_0, \ldots, u_{l-1}) \in \R^{d_l}$.
By writing the system response this way we have separated 
the singularity-free part, $S_l(\tau)$, and the 
nonsmooth part involving $G(h^\alpha \tau^\alpha)$.

For each constraint $H_i x \leq b_i$, where $H_i$ is the 
$i$-th row of $H$, we define the function 
$\psi^{\rm sm}_{l, i}:[0, 1]\to\R^{d_l}$
given by
$\psi^{\rm sm}_{l, i}(\tau) = (H_i S_l(\tau))^\intercal$, 
$\tau\in[0,1]$.
We can apply Lemma \ref{lem:bernstein_bound} to the 
$L_{l, i}^{\rm sm}$-smooth functions $\psi^{\rm sm}_{l, i}$.
We have 
\begin{equation}
    \psi^{\rm sm}_{l, i}([0, 1]) \subseteq 
    \underbracket[0.5pt]{
        \conv\{s_{l, i}^{\nu}\}_{\nu=0}^{q} \oplus \mathcal{B}_{\eta^{\rm sm}_{l, i}}
    }_{\mathcal{Q}_{l, i}^{\rm sm} \subseteq \R^{d_l}},
    \label{eq:overapproximation:sm}
\end{equation}
where $\eta^{\rm sm}_{l, i} = \nicefrac{L_{l, i}^{\rm sm}}{8q}$ and 
$s_{l, i}^{\nu} = \psi^{\rm sm}_{l, i}(\nu/q)$.

For the nonsmooth part, $\psi_{l, i}^{\rm ns}(\tau) = (H_i G(h^\alpha \tau^\alpha)B)^\intercal$, we define 
$\hat{\psi}_{l, i}^{\rm ns}(\rho) = (H_i G(h^\alpha \rho)B)^{\intercal}$, 
where $\rho = \tau^\alpha \in [0, 1]$ is a local fractional time. 
Function $\hat{\psi}_{l, i}^{\rm ns}:[0, 1]\to\R^m$ satisfies the 
conditions of Lemma \ref{lem:bernstein_bound} and 
\begin{equation}
    \psi_{l, i}^{\rm ns}([0, 1])
    =
    \hat{\psi}_{l, i}^{\rm ns}([0, 1])
    \subseteq 
    \underbracket[0.5pt]{
        \conv\{n_{l, i}^{\nu}\}_{\nu=0}^{q} \oplus \mathcal{B}_{\eta^{\rm ns}_{l, i}}
    }_{\mathcal{Q}_{l,i}^{\rm ns} \subseteq \R^m},
    \label{eq:overapproximation:ns}
\end{equation}
where $L^{\rm ns}_{l, i}$ is a Lipschitz constant of 
$\frac{\dd}{\dd\rho}\hat{\psi}^{\rm ns}_{l, i}$,
$\eta^{\rm ns}_{l, i} = \nicefrac{L^{\rm ns}_{l, i}}{8q}$,
and $n_{l, i}^{\nu} = \hat{\psi}^{\rm ns}_{l, i}(\nu/q)$.

Note that the decoupling into a smooth and a nonsmooth part introduces
some conservatism, which we address in Section 
\ref{sec:tighter_approximations}.

The case $l=0$ is handled separately since no $\Delta u$ term 
is defined. For $l=0$ it is $x(t) = S_0(\tau) z_0$
for $t\in I_0$ and $t=h\tau$, where $z_0 = (x_0, u_0)$ and 
$S_0(\tau) = [\mu(h\tau) ~ \sigma_0(h\tau)B]
= [E_\alpha(Ah^\alpha \tau^\alpha) ~ G(h^\alpha \tau^\alpha)B]$.
We define the function 
$\hat{\psi}_{0, i}(\rho) = [H_iE_\alpha(Ah^\alpha \rho) ~ H_iG(h^\alpha \rho)B]^\intercal$,
in terms of the local fractional time $\rho = \tau^\alpha$.
The function $\hat{\psi}_{0, i}$ satisfies the requirements of 
Lemma \ref{lem:bernstein_bound}, which allows to obtain a 
polytope $\mathcal{Q}_{0,i}$ such that 
$\hat\psi_{0,i}([0,1])\subseteq \mathcal{Q}_{0,i}$.

In the following section we will see how the approximations 
in \eqref{eq:overapproximation:sm} and \eqref{eq:overapproximation:ns}
can be cast as simple convex constraints within an optimal 
control problem. 

\subsection{Tractable state constraints}
The constraints $H_i x(t) \leq b_i$ for $t\in I_l$ are equivalent to 
\begin{equation}
    \psi_{l, i}^{{\rm sm}}(\tau)^\intercal z_{l-1} 
    {}+{}
    \psi_{l, i}^{\rm ns}(\tau)^\intercal \Delta u_l \leq b_i,
\end{equation}
for $\tau \in [0, 1]$. Using the polytopic overapproximations 
of \eqref{eq:overapproximation:sm} and 
\eqref{eq:overapproximation:ns} we derive the following 
sufficient condition for state constraint satisfaction
\begin{multline}
    q_{l,i}^{\rm sm\; \intercal}z_{l-1} + q_{l,i}^{\rm ns\; \intercal}\Delta u_l \leq b_i,     
    \\
    \forall q_{l,i}^{\rm sm}\in\mathcal{Q}_{l,i}^{\rm sm}, 
    \forall q_{l,i}^{\rm ns}\in\mathcal{Q}_{l,i}^{\rm ns}.
\end{multline}
This is equivalent to
\begin{align}
    &\max_{q_{l,i}^{\rm sm}\in\mathcal{Q}_{l,i}^{\rm sm}}
    q_{l,i}^{\rm sm\; \intercal}z_{l-1} 
    {}+{} 
    \max_{q_{l,i}^{\rm ns}\in\mathcal{Q}_{l,i}^{\rm ns}}
    q_{l,i}^{\rm ns\; \intercal}\Delta u_l 
    {}\leq{} b_i
    \\
    \Leftrightarrow{}
    & \delta^*_{\mathcal{Q}_{l,i}^{\rm sm}}(z_{l-1})
    + \delta^*_{\mathcal{Q}_{l,i}^{\rm ns}}(\Delta u_l )
    {}\leq{} b_i
\end{align}
Using the properties of support functions 
(i) $\delta^*_{\conv \{s_\nu\}_\nu}(z) = \max_{\nu} s_\nu^{\intercal} z$,
(ii) if $\mathcal{B}_R$ is then $\|{}\cdot{}\|$-ball of radius $R$,
 the $\delta^*_{\mathcal{B}_R}(z) = R\|z\|_{*}$, where $\|{}\cdot{}\|_*$ 
 is the dual norm,
and (iii) $\delta^*_{A\oplus B} = \delta^*_A + \delta^*_B$, we have 
\begin{align}
    &\max_{\nu=0,\ldots, q} s_{l, i}^{\nu\; \intercal} z_{l-1}
    + \eta^{\rm sm}_{l,i} \|z_{l-1}\|_*
    \notag\\
    &\quad{}+ \max_{\nu=0,\ldots, q} n_{l, i}^{\nu\; \intercal} \Delta u_{l}
    + \eta^{\rm ns}_{l,i}  \|\Delta u_{l}\|_*  \leq b_i,
    \label{eq:state_constraints_condition_max}
\end{align}
where $\|{}\cdot{}\|_*$ is the dual of the norm used in Lemma 
\ref{lem:bernstein_bound}. We can introduce the slack variables 
$\lambda_{l,i}^{\rm sm}$ and $\lambda_{l,i}^{\rm ns}$ 
for the two maxima in \eqref{eq:state_constraints_condition_max}
to rewrite it as 
\begin{subequations}\label{eq:state_constraints_condition_epi}
    \begin{align}
        \lambda_{l,i}^{\rm sm} 
        + \lambda_{l,i}^{\rm ns} 
        + \eta^{\rm sm}_{l,i}\|z_{l-1}\|_* 
        + \eta^{\rm ns}_{l,i} \|\Delta u_{l}\|_*  \leq b_i,
    \\
    s_{l, i}^{\nu\; \intercal} z_{l-1} \leq \lambda_{l,i}^{\rm sm},
    \\
    n_{l, i}^{\nu\; \intercal} \Delta u_{l}  \leq \lambda_{l,i}^{\rm ns},
    \end{align}
\end{subequations}
for $\nu=0,\ldots, q$, $i=1,\ldots, n_c$, and $l\geq 1$.

Likewise, for $l=0$ suppose $\hat\psi_{0,i}$ is $L_{0,i}$-smooth 
and let $\eta_{0,i}=\nicefrac{L_{0,i}}{8q}$. 
Then the constraint $H_i x(t)\leq b_i$ holds on $I_0$
whenever 
$\delta^*_{\mathcal Q_{0,i}}(z_0) {}={}
\max_{\nu=0,\ldots,q}p_{0,i}^{\nu\;\intercal}z_0
+
\eta_{0,i}\|z_0\|_*
\leq b_i$.
Equivalently, introducing a slack variable $\lambda_{0,i}$, this can be written as
\begin{subequations}\label{eq:state_constraints_condition_epi:l0}
    \begin{align}
        \lambda_{0,i}+\eta_{0,i}\|z_0\|_* \leq{}& b_i,\\
        p_{0,i}^{\nu\;\intercal}z_0 \leq{}& \lambda_{0,i},
    \end{align}
\end{subequations}
for $\nu=0,\ldots,q$ and $i=1, \ldots, n_c$.

If the norm used in Lemma \ref{lem:bernstein_bound} 
is the infinity norm, then the dual norm in 
\eqref{eq:state_constraints_condition_max} and 
\eqref{eq:state_constraints_condition_epi:l0} is the 1-norm, thus
by introducing standard epigraphical reformulations 
for the terms $\|z_{l-1}\|_1$, 
$\|\Delta u_l\|_1$, and $\|z_0\|_1$, the tightened state 
constraints can be written as polyhedral constraints.
If $\mathcal{U}$ is polyhedral and the cost is quadratic convex, 
the resulting approximation is a quadratic program.

\subsection{Tighter approximations}\label{sec:tighter_approximations}
The fact that we treat $\psi^{\rm sm}$ and $\psi^{\rm ns}$ separately
introduces some conservatism. To address this issue we subdivide 
$[0, 1]$ into equal subintervals 
$J_r=[\nicefrac{r}{N_s}, \nicefrac{r+1}{N_s}]$ 
for $r=0,\ldots, N_s-1$ and determine Bernstein enclosures 
over each subinterval. 
We will show that the overall conservatism can be controlled by $N_s$.
This approach is somewhat reminiscent of composite Bernstein
polynomials \citep{MacLin+2024}, which serve as a 
collocation method for optimal control problems.
Instead, here we do not approximate the trajectories by 
Bernstein polynomials, but use them for the construction 
of enclosures.

% What is the gap?
For fixed $l\geq 1$, $i=1, \ldots, n_c$, $z_{l-1}$, 
and $\Delta u_l$, let $y_l=(z_{l-1},\Delta u_l)$. 
The exact left-hand side of the $i$th constraint is
\begin{equation}
    M_{l,i}(y_l)
    {}\dfn{} 
    \sup_{\tau \in [0,1]}
    \{
        \psi_{l, i}^{\rm sm}(\tau)^\intercal z_{l-1}
        {}+{}
        \psi_{l, i}^{\rm ns}(\tau)^\intercal \Delta u_{l}
    \}.
    \label{eq:M}
\end{equation}
The corresponding exact constraint is $M_{l,i}(y_l)\leq b_i$.
By applying the approach of Section \ref{sec:polytopic_overapproximations}
let the local Bernstein enclosure of the smooth part on $J_r$ be
$
    \mathcal Q_{l,i,r}^{\rm sm}=
    \conv\{s_{l,i,r}^{\nu}\}_{\nu=0}^{q}
    \oplus
    \mathcal B_{\eta_{l,i,r}^{\rm sm}},
$
and let the local Bernstein enclosure of the nonsmooth part be
$
    \mathcal Q_{l,i,r}^{\rm ns}
    =
    \conv\{n_{l,i,r}^{\nu}\}_{\nu=0}^{q}
    \oplus
    \mathcal B_{\eta_{l,i,r}^{\rm ns}},
$
such that 
$\psi_{l,i}^{\rm sm}(J_r)\subseteq \mathcal Q_{l,i,r}^{\rm sm},$
and 
$\psi_{l,i}^{\rm ns}(J_r)\subseteq \mathcal Q_{l,i,r}^{\rm ns}$.
The Bernstein enclosure of the left-hand side over $J_r$ is
\begin{align}
    &B_{l,i,r,N_s,q}(y_l)
    {}\dfn{}
    \hspace{-0.5em}
    \max_{\nu=0,\ldots, q} s_{l, i, r}^{\nu\; \intercal}z_{l-1}
    + \eta^{\rm sm}_{l, i, r}\|z_{l-1}\|_*
    \notag\\
    &\qquad{}+ \max_{\nu=0,\ldots, q} n_{l, i, r}^{\nu\; \intercal}\Delta u_{l}
    + \eta^{\rm ns}_{l, i, r}\|\Delta u_{l}\|_* .
    \label{eq:Br}
\end{align}
The corresponding sufficient constraint is
$B_{l,i,r,N_s,q}(y_l)\leq b_i$.
For the fixed data in this subsection, write
$M\dfn M_{l,i}(y_l)$ and $B_r\dfn B_{l,i,r,N_s,q}(y_l)$.
We define the \emph{gap} between $B_r$ and $M$ as 
\begin{equation}
    \gap_{N_s} \dfn \max_{r} B_r - M.
\end{equation}
Next, we will show that $\gap_{N_s} = O(N_s^{-\min\{1, 2\alpha\}})$.

% -----------------------------------
% THEOREM 3
% Conservatism estimate
% -----------------------------------
% \begin{framed}
\begin{theorem}[Conservatism estimate]%
\label{thm:conservatism_estimate}
    Fix $l \geq 1$, $i=1, \ldots, n_c$, $z_{l-1}$, 
    and $\Delta u_l$. Suppose 
    (i) $\hat{\psi}^{\rm ns}_{l, i}$ is $L^{\rm ns}_{l, i}$-smooth, 
    and 
    (ii) $\psi^{\rm sm}_{l, i}$ is $L^{\rm sm}_{l, i}$-smooth.
    Let $K^{\rm sm}_{l,i}$ be a Lipschitz constant of 
    $\psi^{\rm sm}_{l,i}$ on $[0,1]$.
    Then, for a uniform partition of $[0,1]$ into $N_s$ subintervals and 
    fixed Bernstein degree $q$, $\mathrm{gap}_{N_s} \geq 0$ and
    \begin{equation}
        \mathrm{gap}_{N_s} \leq
        \left(\tfrac{K^{\rm sm}_{l,i}}{N_s}
        +
        \tfrac{L^{\rm sm}_{l, i}}{8qN_s^2}
        \right)
        \|z_{l-1}\|_*
        +
        \frac{L^{\rm ns}_{l, i}\|\Delta u_l\|_*}{8qN_s^{2\alpha}}.
    \end{equation}    
\end{theorem}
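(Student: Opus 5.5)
The plan is to prove the lower bound $\mathrm{gap}_{N_s}\geq 0$ from the enclosure property alone. For the upper bound, I compare the Bernstein bound $B_{r}$ on the worst subinterval with the exact left-hand side evaluated at one well-chosen point of that subinterval, namely a node where the nonsmooth vertex term attains its maximum.

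\emph{Nonnegativity.} By construction, $\psi_{l,i}^{\rm sm}(J_r)\subseteq \mathcal Q^{\rm sm}_{l,i,r}$ and $\psi_{l,i}^{\rm ns}(J_r)\subseteq \mathcal Q^{\rm ns}_{l,i,r}$. Hence for every $\tau\in J_r$,
\[
\psi_{l,i}^{\rm sm}(\tau)^\intercal z_{l-1}+\psi_{l,i}^{\rm ns}(\tau)^\intercal \Delta u_l \leq \delta^*_{\mathcal Q^{\rm sm}_{l,i,r}}(z_{l-1})+\delta^*_{\mathcal Q^{\rm ns}_{l,i,r}}(\Delta u_l)=B_r .
\]
The equality is the support-function identity already used to obtain \eqref{eq:state_constraints_condition_max}. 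Taking the supremum over $\tau\in J_r$ and then over $r$ gives $M\leq \max_r B_r$, since the $J_r$ cover $[0,1]$.

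\emph{Radii of the local enclosures.} On $J_r$, I reparametrise the smooth part as $g_r(s)=\psi^{\rm sm}_{l,i}((r+s)/N_s)$ for $s\in[0,1]$. Then $g_r'$ is $L^{\rm sm}_{l,i}/N_s^{2}$-Lipschitz, so Lemma~\ref{lem:bernstein_bound} gives the radius $\eta^{\rm sm}_{l,i,r}=L^{\rm sm}_{l,i}/(8qN_s^2)$, with nodes $s^\nu_{l,i,r}=\psi^{\rm sm}_{l,i}(\tau_\nu)$ and $\tau_\nu\in J_r$.

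For the nonsmooth part I work in the fractional time $\rho=\tau^\alpha$. The subinterval $J_r$ maps onto $[\rho_0,\rho_1]=[(r/N_s)^\alpha,((r+1)/N_s)^\alpha]$. By concavity and subadditivity of $x\mapsto x^\alpha$, its length satisfies $\rho_1-\rho_0\leq N_s^{-\alpha}$. Reparametrising $\hat\psi^{\rm ns}_{l,i}$ affinely onto $[0,1]$ gives a function whose derivative is $L^{\rm ns}_{l,i}(\rho_1-\rho_0)^2\leq L^{\rm ns}_{l,i}N_s^{-2\alpha}$-Lipschitz. Lemma~\ref{lem:bernstein_bound} then gives $\eta^{\rm ns}_{l,i,r}\leq L^{\rm ns}_{l,i}/(8qN_s^{2\alpha})$, with nodes $n^\nu_{l,i,r}=\psi^{\rm ns}_{l,i}(\tau'_\nu)$ where $\tau'_\nu=\rho_\nu^{1/\alpha}\in J_r$. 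This $N_s^{-2\alpha}$ scaling is where the exponent $\min\{1,2\alpha\}$ comes from.

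\emph{Main estimate.} Let $r$ be an index attaining $\max_r B_r$. Choose a node $\tau^\star=\tau'_{\nu^\star}\in J_r$ at which $\max_\nu n^{\nu\,\intercal}_{l,i,r}\Delta u_l=\psi^{\rm ns}_{l,i}(\tau^\star)^\intercal\Delta u_l$.

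For the smooth vertex term, every node $\tau_\nu\in J_r$ satisfies $|\tau_\nu-\tau^\star|\leq 1/N_s$. By the Lipschitz property of $\psi^{\rm sm}_{l,i}$ and the dual-norm inequality $|a^\intercal z|\leq\|a\|\|z\|_*$,
\[
\max_\nu s^{\nu\,\intercal}_{l,i,r}z_{l-1}\leq \psi^{\rm sm}_{l,i}(\tau^\star)^\intercal z_{l-1}+\tfrac{K^{\rm sm}_{l,i}}{N_s}\|z_{l-1}\|_* .
\]
Adding the two vertex bounds and the two radius terms gives
\[
B_r\leq \psi^{\rm sm}_{l,i}(\tau^\star)^\intercal z_{l-1}+\psi^{\rm ns}_{l,i}(\tau^\star)^\intercal\Delta u_l+\Big(\tfrac{K^{\rm sm}_{l,i}}{N_s}+\tfrac{L^{\rm sm}_{l,i}}{8qN_s^2}\Big)\|z_{l-1}\|_*+\tfrac{L^{\rm ns}_{l,i}\|\Delta u_l\|_*}{8qN_s^{2\alpha}} .
\]
The first two terms are at most $M$, which yields the claimed bound.

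The main obstacle is coupling the two separately maximised vertex terms. A naive comparison would also require a Lipschitz constant for $\psi^{\rm ns}_{l,i}$, which blows up near $\tau=0$. Evaluating at the maximising nonsmooth node is what avoids any $K^{\rm ns}$ term. I would also check that the rescaled smoothness constants are computed by the chain rule exactly as stated.
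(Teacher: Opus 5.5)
Your proposal is correct and matches the paper's proof in all essential steps: the same support-function nonnegativity argument, the same rescaled radii $L^{\rm sm}_{l,i}/(8qN_s^2)$ and $L^{\rm ns}_{l,i}|J_r^\alpha|^2/(8q)\leq L^{\rm ns}_{l,i}/(8qN_s^{2\alpha})$, and the same key idea of anchoring at a maximizer of the nonsmooth term so that only the Lipschitz constant of the smooth part is paid. The difference is cosmetic: you anchor at the node maximizing $n^{\nu\,\intercal}_{l,i,r}\Delta u_l$ and compare directly with $M$, whereas the paper anchors at a continuous maximizer $\tau_c$ of $c$ on $J_r$, bounds $A_r+C_r-M_r\leq K_a/N_s$, and then uses $\gap_{N_s}\leq\max_r(B_r-M_r)$.
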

% \end{framed}

\begin{proof}
% Proof - part I
For notational convenience, let us define 
$a(\tau)=\psi_{l,i}^{\rm sm}(\tau)^\intercal z_{l-1}$ and
$c(\tau)=\psi_{l,i}^{\rm ns}(\tau)^\intercal \Delta u_{l}$.
Since $\psi^{\rm sm}_{l,i}$ is $K^{\rm sm}_{l,i}$-Lipschitz, 
$a$ is $K_a$-Lipschitz with 
$K_a\dfn K^{\rm sm}_{l,i}\|z_{l-1}\|_*$.
For each subinterval $J_r$, define
$M_r {}={} \sup_{\tau\in J_r}\{a(\tau)+c(\tau)\}$.
Since the $J_r$ form a partition of $[0,1]$, we have
$M=\max_{r} M_r$.
For $\tau\in J_r$ note that 
$a(\tau) \leq \delta^*_{\mathcal Q_{l,i,r}^{\rm sm}}(z_{l-1})$
and 
$c(\tau) \leq \delta^*_{\mathcal Q_{l,i,r}^{\rm ns}}(\Delta u_l)$,
therefore,
\begin{equation}
    a(\tau)+c(\tau)\leq B_r,
    \forall \tau\in J_r,
\end{equation}
where $B_r$ is given in \eqref{eq:Br}.
Taking the supremum over $J_r$ gives $M_r\leq B_r$, so
$M=\max_r M_r\leq \max_r B_r$, which implies 
$\gap_{N_s}\geq 0$.

% Proof - part II
Let $A_r{}={} \sup_{\tau\in J_r}a(\tau)$ and 
$C_r{}={} \sup_{\tau\in J_r}c(\tau)$.
Since the vertices $s_{l,i,r}^{\nu}$ and $n_{l,i,r}^{\nu}$ are
sampled values of $\psi_{l,i}^{\rm sm}$ and $\hat{\psi}_{l,i}^{\rm ns}$
on $J_r$, respectively,
\begin{subequations}
    \begin{align}
        \max_{\nu=0,\ldots,q}
        (s_{l,i,r}^{\nu})^\intercal z_{l-1}
        \leq A_r,
        \\
        \max_{\nu=0,\ldots,q}
        (n_{l,i,r}^{\nu})^\intercal \Delta u_l
        \leq C_r.
    \end{align}
\end{subequations}
From \eqref{eq:Br} it follows that
\begin{equation}
    B_r
    \leq
    A_r+C_r
    +
    \eta_{l,i,r}^{\rm sm}\|z_{l-1}\|_*
    +
    \eta_{l,i,r}^{\rm ns}\|\Delta u_l\|_* .
    \label{eq:Br_upper_bound}
\end{equation}

Because $a$ and $c$ are continuous on the compact interval $J_r$,
there exist $\tau_a,\tau_c\in J_r$ such that
$a(\tau_a)=A_r$ and $c(\tau_c)=C_r$.
Therefore, using the Lipschitz continuity of $a$,
\begin{align}
    &M_r
    {}={}
    \sup_{\tau\in J_r}\{a(\tau)+c(\tau)\}
    \geq
    a(\tau_c)+C_r
    \notag\\
    \Rightarrow{}& 
    A_r+C_r-M_r
    \leq
    A_r-a(\tau_c)
    =
    a(\tau_a)-a(\tau_c)
    \notag\\
    &\qquad \leq
    K_a|\tau_a-\tau_c|
    \leq
    K_a|J_r|
    =
    \frac{K_a}{N_s}
    \label{eq:Ar_Cr_bound}
\end{align}
Combining \eqref{eq:Br_upper_bound} and \eqref{eq:Ar_Cr_bound} we obtain
\begin{equation}
    B_r-M_r
    \leq
    \frac{K^{\rm sm}_{l,i}\|z_{l-1}\|_*}{N_s}
    +
    \eta_{l,i,r}^{\rm sm}\|z_{l-1}\|_*
    +
    \eta_{l,i,r}^{\rm ns}\|\Delta u_l\|_* .
\end{equation}
To bound $\eta_{l,i,r}^{\rm sm}$ we note that the maps
$\tau {}\mapsto{} \psi_{l,i}^{\rm sm}\left(\frac{r+\tau}{N_s}\right),$
are $L^{\rm sm}_{l, i}/N_s^2$-smooth on $[0, 1]$, so by
Lemma~\ref{lem:bernstein_bound},
\begin{equation}
    \eta_{l,i,r}^{\rm sm}
    \leq
    \frac{L^{\rm sm}_{l, i}}{8qN_s^2}.
\end{equation}

For the nonsmooth part, we use the fractional time $\rho=\tau^\alpha$
via which the interval $J_r$ is mapped to
\begin{equation}
    J_r^{\alpha}
    =
    \left[
        \left(\frac{r}{N_s}\right)^\alpha,
        \left(\frac{r+1}{N_s}\right)^\alpha
    \right].
\end{equation}
Since $\tau\mapsto\tau^\alpha$ is concave on $[0,1]$, 
the largest such
length is attained at $r=0$, and therefore
$|J_r^{\alpha}| {}\leq{} N_s^{-\alpha}$.
The normalized local map for the nonsmooth part has second derivative
bounded by $L^{\rm ns}_{l, i}|J_r^{\alpha}|^2$. Again by
Lemma~\ref{lem:bernstein_bound},
\begin{equation}
    \eta_{l,i,r}^{\rm ns}
    \leq
    \frac{L^{\rm ns}_{l, i}|J_r^{\alpha}|^2}{8q}
    \leq
    \frac{L^{\rm ns}_{l, i}}{8qN_s^{2\alpha}}.
\end{equation}
Therefore, for every $r$,
\begin{multline}
    B_r-M_r
    \leq
    \frac{K^{\rm sm}_{l,i}\|z_{l-1}\|_*}{N_s}
    \\
    +
    \frac{L^{\rm sm}_{l, i}\|z_{l-1}\|_*}{8qN_s^2}
    +
    \frac{L^{\rm ns}_{l, i}\|\Delta u_l\|_*}{8qN_s^{2\alpha}}.
\end{multline}
Observing that
$
    \gap_{N_s}
    =
    \max_r B_r-\max_r M_r
    \leq
    \max_r(B_r-M_r),$
the assertion follows.
\end{proof}

For the case $l=0$, as discussed in 
Section \ref{sec:polytopic_overapproximations}, 
$\hat{\psi}_{0,i}$ depends only on the fractional time 
$\rho=\tau^\alpha \in [0,1]$. We subdivide the $[0, 1]$
into $N_s$ equal intervals and analogously to \eqref{eq:M}
and \eqref{eq:Br} we define 
$M_{0,i} = \sup_{\rho \in [0, 1]}\hat{\psi}_{0,i}(\rho)^\intercal z_0$
and $B_{0,i,r}$. 
The gap at $l=0$ is $\gap_{0,i,N_s} = \max_{r}B_{0,i,r} - M_{0,i}$.
Following the same arguments as in the above proof, we arrive at 
\begin{equation}
        0\leq \gap_{0,i,N_s}
        \leq
        \frac{L_{0,i}\|z_0\|_*}{8qN_s^2},
\end{equation}
that is, $\gap_{0,i,N_s} = O(N_s^{-2})$.

Next, we show that under a Slater-type constraint qualification,
as $N_s\to\infty$ the sequence of global feasible sets obtained 
from the Bernstein enclosures converges to the feasible set of 
the original semi-infinite constraints.

% -----------------------------------
% THEOREM 4
% Convergence of feasible sets
% -----------------------------------
% \begin{framed}
\begin{theorem}[Convergence of feasible sets]
\label{thm:PK_convergence_feasible_sets}
Fix a prediction horizon $N$ and a Bernstein degree $q$.
Let $w=(x_0,u_0,\ldots,u_{N-1})$. 
For $l=0$, let $y_0(w)=(x_0,u_0)$, and for
$l=1,\ldots,N-1$, let
$y_l(w)=(z_{l-1},\Delta u_l)$, where
$z_{l-1}=(x_0,u_0,\ldots,u_{l-1})$ and
$\Delta u_l=u_l-u_{l-1}$.
For $l\geq1$, let $M_{l,i}$ and $B_{l,i,r,N_s,q}$ be as in
\eqref{eq:M} and \eqref{eq:Br}, and set
\[
    B_{l,i,N_s,q}(y_l){}\dfn{}
    \max_{r=0,\ldots,N_s-1}B_{l,i,r,N_s,q}(y_l).
\]
For $l=0$, define $M_{0,i}$ and $B_{0,i,r}$ as above, using the
fractional-time representation, and set
\[
    B_{0,i,N_s,q}(y_0){}\dfn{}
    \max_{r=0,\ldots,N_s-1}B_{0,i,r}(y_0).
\]
Let $\mathcal I=\{0,\ldots,N-1\}\times\{1,\ldots,n_c\}$ and
define the exact and approximate feasible sets
\begin{subequations}
    \begin{align}
        \F
        {}={}&
        \left\{
            w {:}
            M_{l,i}(y_l(w))\leq b_i,\; 
            (l,i)\in\mathcal{I}
        \right\},
        \\
        \F_{N_s}
        {}={}&
        \left\{
            w {:}
            B_{l,i,N_s,q}(y_l(w))\leq b_i,\;
            (l,i)\in\mathcal{I}
        \right\}.
        \label{eq:FNs}
    \end{align}
\end{subequations}
Suppose that the assumptions of Theorem~\ref{thm:conservatism_estimate}
hold for all $l=1,\ldots,N-1$ and $i=1,\ldots,n_c$, 
and that the analogous assumptions for $l=0$ hold.
Assume also that there is a globally strictly feasible point 
$\bar{w}$, i.e.,
\begin{equation}
    M_{l,i}(y_l(\bar{w}))<b_i,
    (l,i)\in\mathcal{I}.
    \label{eq:strict_feasibility_condition}
\end{equation}
Then,
\begin{equation}
    \lim_{N_s \to \infty}\F_{N_s} = \F,
\end{equation}
where the convergence is in 
the Painlev{\'e}-Kuratowski sense~\citep{RocWet98}.
\end{theorem}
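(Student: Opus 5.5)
We prove Painlev\'e--Kuratowski convergence by establishing the two inclusions $\limsup_{N_s}\F_{N_s}\subseteq\F$ and $\F\subseteq\liminf_{N_s}\F_{N_s}$.

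\emph{Outer limit.} The first part of Theorem~\ref{thm:conservatism_estimate}, and its analogue for $l=0$, gives $\gap\geq 0$, that is, $M_{l,i}(y_l)\leq B_{l,i,N_s,q}(y_l)$ for every $y_l$. Hence $\F_{N_s}\subseteq\F$ for every $N_s$.

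Next we check that $\F$ is closed. Each $M_{l,i}$ is a supremum over the compact interval $[0,1]$ of functions that are continuous jointly in $(\tau,y)$; indeed $\psi^{\rm sm}_{l,i}$ and $\psi^{\rm ns}_{l,i}$ are continuous. So $M_{l,i}$ is continuous; in fact it is convex and finite, being a supremum of linear functions of $y$. The map $w\mapsto y_l(w)$ is linear. Therefore $\F$ is closed.

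Now take $w\in\limsup\F_{N_s}$. Then $w=\lim_k w_k$ with $w_k\in\F_{N_{s,k}}\subseteq\F$, and closedness gives $w\in\F$. This settles the outer limit.

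\emph{Inner limit.} Fix $w\in\F$. We must produce $w_{N_s}\in\F_{N_s}$ with $w_{N_s}\to w$. The Slater point $\bar w$ supplies the slack. Set
\[
\epsilon\dfn\min_{(l,i)\in\mathcal I}\bigl(b_i-M_{l,i}(y_l(\bar w))\bigr)>0,
\]
and for $\theta\in(0,1]$ put $w_\theta=(1-\theta)w+\theta\bar w$. Since each $M_{l,i}$ is convex and $y_l$ is linear,
\[
M_{l,i}(y_l(w_\theta))\leq(1-\theta)b_i+\theta\bigl(b_i-\epsilon\bigr)=b_i-\theta\epsilon .
\]
For the gap bound, Theorem~\ref{thm:conservatism_estimate} and the $l=0$ estimate give
\[
B_{l,i,N_s,q}(y_l(w_\theta))\leq M_{l,i}(y_l(w_\theta))+c_N(N_s)\,\|y_l(w_\theta)\|_*,
\]
where
\[
c_N(N_s)=\max_{l,i}\Bigl(\tfrac{K^{\rm sm}_{l,i}}{N_s}+\tfrac{L^{\rm sm}_{l,i}}{8qN_s^2}+\tfrac{L^{\rm ns}_{l,i}}{8qN_s^{2\alpha}}+\tfrac{L_{0,i}}{8qN_s^2}\Bigr)\to0 .
\]
The constants $K$ and $L$ do not depend on $y_l$, so this bound is uniform. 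Moreover, $\|y_l(w_\theta)\|_*\leq R\dfn\max_l\bigl(\|y_l(w)\|_*+\|y_l(\bar w)\|_*\bigr)$ for all $\theta\in[0,1]$.

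It follows that $w_\theta\in\F_{N_s}$ whenever $c_N(N_s)R\leq\theta\epsilon$. Choose
\[
\theta_{N_s}=\min\bigl\{1,\;c_N(N_s)R/\epsilon\bigr\}\to0 .
\]
If $\theta_{N_s}=1$, the point $w_{\theta_{N_s}}=\bar w$, and membership in $\F_{N_s}$ holds only when the condition is met; since this case occurs for finitely many $N_s$ at most, and $\liminf$ only requires $\F_{N_s}\neq\emptyset$ eventually, those terms can be discarded. For all sufficiently large $N_s$ we therefore have $w_{N_s}\dfn w_{\theta_{N_s}}\in\F_{N_s}$ and $w_{N_s}\to w$. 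Hence $\F\subseteq\liminf\F_{N_s}$, and together with the outer limit, $\lim_{N_s}\F_{N_s}=\F$.

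\emph{Main obstacle.} The delicate step is justifying that the gap estimate holds uniformly along the segment, with the same constants and a multiplier proportional to $\|y_l\|_*$. This requires that the smoothness and Lipschitz constants in Theorem~\ref{thm:conservatism_estimate} depend only on $(l,i)$ and not on $(z_{l-1},\Delta u_l)$, which the hypotheses as stated provide. We also need to verify that $\|z_{l-1}\|_*$ and $\|\Delta u_l\|_*$ are both bounded by a multiple of $\|y_l\|_*$.
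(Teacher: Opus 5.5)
Your proof is correct and follows the same strategy as the paper. For the outer limit, $\F_{N_s}\subseteq\F$ (from $\gap_{N_s}\geq 0$) together with closedness of $\F$ suffices. For the inner limit, the Slater point and the convexity of $M_{l,i}$ push points of $\F$ into the strict interior. The only difference is in the inner-limit step: the paper uses the hit-and-miss criterion with a fixed $\theta$ and only pointwise convergence $B_{l,i,N_s,q}\to M_{l,i}$ at $w_\theta$. You instead build the approximating sequence directly, with $\theta_{N_s}\propto c_N(N_s)$ from the uniform rate of Theorem~\ref{thm:conservatism_estimate}. This additionally yields $\operatorname{dist}(w,\F_{N_s})=O(N_s^{-\min\{1,2\alpha\}})$ for each $w\in\F$.
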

% \end{framed}

\begin{proof}
    % 1. Intro
    By Theorem~\ref{thm:conservatism_estimate} and its analogue for $l=0$,
    $0\leq B_{l,i,N_s,q}(y_l)-M_{l,i}(y_l)$ for every 
    $(l,i)\in\mathcal I$, $N_s$, and $y_l$.
    If $w\in\F_{N_s}$, then
    $M_{l,i}(y_l(w))\leq B_{l,i,N_s,q}(y_l(w))\leq b_i$
    for all $l$ and $i$, so $w\in\F$.
    Therefore, $\F_{N_s}\subseteq\F$ for every $N_s$.

    The set $\F$ is closed because, by Berge's maximum 
    theorem~\citep[Sec.~VI-3]{Berge97}, each $M_{l,i}$ is continuous
    as the supremum over a compact interval of functions that are 
    continuous in time and affine in $y_l$, and $y_l$ is an affine 
    function of $w$.

    By the definition of the outer limit
    \citep[Def.~4.1]{RocWet98}, any point in
    $\limsup_{N_s\to\infty}\F_{N_s}$ is the limit of a sequence
    $\zeta_{N_s}\in\F_{N_s}$ along an infinite index set. Since
    $\zeta_{N_s}\in\F$ and $\F$ is closed, that limit belongs to $\F$,
    that is,
    \begin{equation}
        \limsup_{N_s\to\infty}\F_{N_s}\subseteq\F .
        \label{eq:PK_limsup}
    \end{equation}

    % 2. Hit and miss criterion; here we prove that \F is contained in 
    %    the liminf \F_{N_s}   
    Using the hit-and-miss criterion \citep[Thm.~4.5(a)]{RocWet98} we 
    will show that $\F \subseteq \liminf_{N_s\to\infty} \F_{N_s}$.
    Let $O$ be an open set such that $\F\cap O {}\neq{} \emptyset$.
    Take a $w^* {}\in{} \F\cap O$. 
    By the strict feasibility condition in 
    \eqref{eq:strict_feasibility_condition}, 
    there exists $\bar{w}$ such that
    $M_{l,i}(y_l(\bar{w}))<b_i,$
    for $(l,i)\in\mathcal I$.
    Define
    \begin{equation}
        \sigma
        {}={}
        \min_{(l,i)\in\mathcal I}
        \left\{b_i-M_{l,i}(y_l(\bar{w}))\right\}
        {}>{} 0.
    \end{equation}
    For $\theta\in(0,1)$, set
    $w_\theta=\theta \bar{w} {}+{} (1-\theta)w^*$.
    Since $w_\theta\to w^*$ as $\theta{}\downarrow{}0$, 
    and since $O$ is open, we
    may choose $\theta$ sufficiently small so that
    $w_\theta\in O$.
    Note that each map $w\mapsto y_l(w)$ is affine and, 
    from \eqref{eq:M} and its analogue for $l=0$, 
    $M_{l,i}$ is the supremum over time of affine 
    functions of $y_l$, so it is convex.
    For every $(l,i)\in\mathcal I$, by the convexity of $M_{l,i}$,
    \begin{align}
        M_{l,i}(y_l(w_\theta))
        {}\leq{}&
        (1-\theta)M_{l,i}(y_l(w^*))
        +\theta M_{l,i}(y_l(\bar{w}))  
        \notag\\
        {}\leq{}&
        \theta(b_i-\sigma) + (1-\theta)b_i  {}={}
        b_i-\theta\sigma.
    \end{align}
    Thus, $w_\theta$ is strictly feasible for the exact constraints, 
    with margin at least $\theta\sigma$. It remains to show that 
    $w_\theta$ is eventually contained in all $\F_{N_s}$.

    By Theorem~\ref{thm:conservatism_estimate} and its analogue for $l=0$,
    for each fixed $w_\theta$ it is
    $B_{l,i,N_s,q}(y_l(w_\theta))
    \to M_{l,i}(y_l(w_\theta))$ as $N_s\to\infty$.
    Since the number of intervals and constraint rows is finite, 
    there exists $N_0$ such that,
    for all $N_s\geq N_0$,
    \begin{align}
        B_{l,i,N_s,q}(y_l(w_\theta))
        {}\leq{}&
        M_{l,i}(y_l(w_\theta))+\frac{\theta\sigma}{2} 
        \notag
        \\
        {}\leq{}&
        b_i-\frac{\theta\sigma}{2}
        {}<{}
        b_i,
    \end{align}
    for all $(l,i)\in\mathcal I$.
    Therefore, $w_\theta\in \F_{N_s}$
    for all $N_s\geq N_0$.
    Since also $w_\theta\in O$, it is
    $O {}\cap{} \F_{N_s} {}\neq{} \emptyset$ for
    $N_s {}\geq{} N_0$, so the hit-and-miss condition holds.
    In light of \eqref{eq:PK_limsup}, this completes the proof.
\end{proof}

\begin{remark}[Input constraints]
Theorem~\ref{thm:PK_convergence_feasible_sets} is stated only for the
state constraints. If input constraints $u_k\in\mathcal{U}$ are also
included, with $\mathcal{U}$ closed and convex, define
\begin{align*}
    \mathcal{W}_N
    {}\dfn{} \bigl\{(x_0,u_0,\ldots,u_{N-1}) : {}&
    u_k\in\mathcal{U},
    \\
    &k=0,\ldots,N-1\bigr\}.
\end{align*}
The corresponding exact and approximate feasible sets are
$\F\cap\mathcal{W}_N$ and $\F_{N_s}\cap\mathcal{W}_N$.
The same proof applies, provided the strictly feasible point $\bar{w}$
also belongs to $\mathcal{W}_N$.
Indeed, since $\mathcal{U}$ is closed, $\mathcal{W}_N$ is 
closed, so the outer-limit argument holds, and
$w_\theta$ remains feasible because $\mathcal{U}$ is convex.
\end{remark}

Theorem \ref{thm:PK_convergence_feasible_sets} shows that 
the conservatism introduced by the Bernstein tightening 
can be mitigated by sufficiently large $N_s$.
Moreover, under standard continuity and compactness assumptions 
on the cost and the set of minimizers, this feasible-set convergence 
can be used to establish convergence of optimal values and outer 
convergence of optimizers to solutions of the original semi-infinite OCP,
as in \citep{SopPatSar13}.

% Similar to \cite{SopPatSa13}, the convergence of the feasible 
% sets $\F_{N_s}$ to $\F$ implies that for every $x_0$ the 
% corresponding optimal values converge, and the obtained 
% optimizers converge to an optimizer of the original 
% semi-infinite problem.

\section{Simulation results}

\subsection{Tightness of Bernstein enclosures}\label{sec:simulations:tightness}
Firstly, we want to assess the tightness of Bernstein enclosures
by looking at how the gap scales with the number of subintervals,
$N_s$, and the Bernstein degree, $q$.
To this end, we consider a linear fractional system of the form 
\eqref{eq:flti} with
\begin{equation}
    A = \begin{bmatrix}
        \phantom{-}0 & \phantom{-}1\phantom{.4} \\  -2 & -0.4
    \end{bmatrix},
    B = \begin{bmatrix}
        0 \\ 1
    \end{bmatrix},
\end{equation}
and $h=0.2$, 
$x_0=(0.2, -0.1)$,
$u_0=0.3$, 
$u_1 = 0.1$,
$u_2 = 0.25$,
$H=[1 ~ 0]$.
We study the gap for different values of $\alpha$.

\begin{figure}
    \centering
    \tikzsetnextfilename{gap}
\begin{tikzpicture}
\begin{semilogyaxis}[
    name=gap1,
    width=\GapPlotWidth,
    height=\ShortPlotHeight,
    scale only axis,
    trim axis left,
    trim axis right,
    xlabel={Number of subintervals $N_s$},
    ylabel={Gap},
    grid=both,
    xtick={1,2,4,8,16,32},
    xmin=1,
    xmax=32,
    major grid style={draw=gray!20},
    minor grid style={draw=gray!20},
    legend columns=2,
    legend style={
        at={(0.5,1.05)},
        anchor=south,
        draw=none,
        fill=none,
        /tikz/every even column/.append style={column sep=0.8em}
    },
    mark size=1.5pt,
]
\addplot+[mark=o] coordinates {
    (1,0.0171773591764748)
    (2,0.0076765778818226)
    (4,0.0028998159037785)
    (8,0.0013987860947784)
    (16,0.0006770055536653)
    (32,0.0003325706027195)
};
\addlegendentry{$\alpha=0.35$}

\addplot+[mark=square] coordinates {
    (1,0.0240342788596745)
    (2,0.0130576547029798)
    (4,0.0070908086850799)
    (8,0.0036671557989282)
    (16,0.0018836543403132)
    (32,0.0009589343079362)
};
\addlegendentry{$\alpha=0.5$}

\addplot+[mark=triangle] coordinates {
    (1,0.0100666603141316)
    (2,0.0035999831259991)
    (4,0.0013182144141904)
    (8,0.0004661898335634)
    (16,0.0001645626330301)
    (32,0.0000583490512935)
};
\addlegendentry{$\alpha=0.75$}

\addplot+[mark=diamond] coordinates {
    (1,0.0035694936243882)
    (2,0.0009209457994039)
    (4,0.0002407499513841)
    (8,0.0000584867675518)
    (16,0.0000147428861602)
    (32,0.0000038191917359)
};
\addlegendentry{$\alpha=1.0$}
\end{semilogyaxis}

\begin{semilogyaxis}[
    name=gap2,
    at={(gap1.south)},
    anchor=north,
    yshift=-1.25cm,
    width=\GapPlotWidth,
    height=\ShortPlotHeight,
    scale only axis,
    trim axis left,
    trim axis right,
    xlabel={Bernstein degree $q$},
    ylabel={Gap},
    grid=both,
    xmin=2,
    xmax=21,
    major grid style={draw=gray!30},
    minor grid style={draw=gray!10},
    legend style={
        at={(0.98,0.98)},
        anchor=north east,
        fill=white,
        draw=gray
    },
    mark size=1.5pt,
]
\addplot+[mark=o] coordinates {
    (3,0.0014154454328237)
    (5,0.0014042233226166)
    (10,0.0013987860947784)
    (20,0.0013964183835912)
    % (40,0.0013954848979830)
};
% \addlegendentry{$\alpha=0.35$}

\addplot+[mark=square] coordinates {
    (3,0.0040749910922798)
    (5,0.0038482394486234)
    (10,0.0036671557989282)
    (20,0.0035766141860606)
    % (40,0.0035313433796264)
};
% \addlegendentry{$\alpha=0.5$}

\addplot+[mark=triangle] coordinates {
    (3,0.0005145115627391)
    (5,0.0004868008680687)
    (10,0.0004661898335634)
    (20,0.0004558843163108)
    % (40,0.0004507349296285)
};
% \addlegendentry{$\alpha=0.75$}

\addplot+[mark=diamond] coordinates {
    (3,0.0000832251043750)
    (5,0.0000694554359640)
    (10,0.0000584867675518)
    (20,0.0000529729541824)
    % (40,0.0000502160559692)
};
% \addlegendentry{$\alpha=1.0$}
\end{semilogyaxis}
\end{tikzpicture}
    \caption{Gap at $l=2$.
        (Top) Gap vs $N_s$ for different values of $\alpha$ with fixed $q=10$,
        (Bottom) Gap at different $q$ with fixed $N_s=8$.}
    \label{fig:gap_sweep}
\end{figure}

\begin{figure}
    \centering
    \tikzsetnextfilename{gap-l0}
\begin{tikzpicture}
\begin{semilogyaxis}[
    width=\WidePlotWidth,
    height=\WidePlotHeight,
    xlabel={Number of subintervals $N_s$},
    ylabel={Gap},
    grid=both,
    major grid style={draw=gray!20},
    minor grid style={draw=gray!20},
    xtick={1,2,4,8,16,32},
    xmin=1,
    xmax=32,
    legend columns=2,
    legend style={
        at={(0.5,1.05)},
        anchor=south,
        draw=none,
        fill=none,
        /tikz/every even column/.append style={column sep=0.8em}
    },
    mark size=2.2pt,
]
\addplot+[mark=o] coordinates {
    (1,1.3365163532994500e-02)
    (2,3.3430780359175998e-03)
    (4,8.3621355465940005e-04)
    (8,2.0933018877960000e-04)
    (16,5.2588481785798002e-05)
    (32,1.3400448733352777e-05)
};
\addlegendentry{$\alpha=0.35$}

\addplot+[mark=square] coordinates {
    (1,7.4957109588720001e-03)
    (2,1.8746978393396000e-03)
    (4,4.6894539036609998e-04)
    (8,1.1744503307980001e-04)
    (16,2.9562173200908811e-05)
    (32,7.5904867462839309e-06)
};
\addlegendentry{$\alpha=0.5$}

\addplot+[mark=triangle] coordinates {
    (1,2.5225951088758002e-03)
    (2,6.3086633012210001e-04)
    (4,1.5785650382940000e-04)
    (8,3.9594353160199443e-05)
    (16,1.0027606417606361e-05)
    (32,2.6357660085907231e-06)
};
\addlegendentry{$\alpha=0.75$}

\addplot+[mark=diamond] coordinates {
    (1,7.5001553901209996e-04)
    (2,1.8759433340750001e-04)
    (4,4.6979178586537003e-05)
    (8,1.1824160166762354e-05)
    (16,3.0352511042608388e-06)
    (32,8.3800432287839399e-07)
};
\addlegendentry{$\alpha=1$}

\end{semilogyaxis}
\end{tikzpicture}
    \caption{Gap against $N_s$ for $l=0$ using fixed Bernstein 
    degree $q=10$. The fast rate of convergence agrees with the 
    theoretical $O(1/N_s^2)$ of Theorem 
    \ref{thm:conservatism_estimate}.}
    \label{fig:l0_gap_vs_Ns}
\end{figure}

As illustrated in Figues \ref{fig:gap_sweep}
and \ref{fig:l0_gap_vs_Ns},
for fixed Bernstein degree $q$, the gap 
decreases with $N_s$ at a rate which is 
consistent with the theoretical worst-case estimate.
Increasing $q$ for fixed $N_s$ yields only 
modest improvements, suggesting that the main source 
of conservatism in this example is the separate 
treatment of the smooth and nonsmooth terms rather 
than the Bernstein approximation error.
The lowest gap values are observed for values of the 
fractional order, $\alpha$, closer to 1. 
The results shown in Fig.~\ref{fig:gap_sweep}
demonstrate that subdivision is the 
dominant mechanism for reducing conservatism, 
confirming the bound of Theorem \ref{thm:conservatism_estimate}.

\subsection{Computational complexity}\label{sec:simulations:complexity}
In this section, we solve a sampled-data fractional
optimal control problem and measure the solution time for different
prediction horizons $N$ and numbers of subintervals $N_s$. 
We use the same system as in Section \ref{sec:simulations:tightness}, 
with $\alpha=0.75$, $q=10$, and $h=0.2$.
The total cost is
\begin{align}
    J ={}& \int_{0}^{Nh}
    \bigl(x(\xi)^\intercal Qx(\xi)
    +u(\xi)^\intercal Ru(\xi)\bigr)\dd \xi
    \notag\\
    &{}+ x(Nh)^\intercal x(Nh),
    \label{eq:total_cost_example}
\end{align}
where $Q=\operatorname{diag}(6, 0.5)$, and $R=0.01$.
The cost function is discretized using the 
trapezoidal rule.
The system is subject to state constraints
$\smallmat{-0.55\\-1.5} \leq x(t) \leq \smallmat{0.55\\1.5}$, 
and the input bounds $|u_k|\leq 8$.
The numerically stable method of Garrappa~\cite{Garrappa2015} is used
for the computation of Mittag-Leffler functions.

The proposed methodology introduces 
$O(n_cNN_sq)$ constraints of the type 
\eqref{eq:state_constraints_condition_epi} 
and \eqref{eq:state_constraints_condition_epi:l0}.
The resulting inequalities are sparse and have a causal block-staircase
structure. Constraints on interval $I_l$ involve only $x_0$ and the input 
history $u_0,\ldots,u_l$. 
Thus, increasing $N_s$ or $q$ adds rows with repeated 
sparsity patterns rather than destroying the lower-triangular structure 
induced by the finite-horizon dynamics.
The resulting large-but-sparse optimization problems can, therefore, 
be solved efficiently.

Fig.~\ref{fig:runtime_vs_horizon} shows the solve times of 
the sampled-data fractional OCP against $N$ for different 
values of $N_s$.
We use $q=10$ and the initial state $x_0=(-0.4,-0.6)$.
The problems are solved with OSQP~\citep{osqp2020} using Python v3.13 
on an Apple M4 Pro laptop with 24 GB of RAM.

\begin{figure}
\centering
\tikzsetnextfilename{solve-times}
\begin{tikzpicture}
\begin{axis}[
    width=\PlotWidth,
    height=\MediumPlotHeight,
    xlabel={Prediction horizon, $N$},
    ylabel={Solve time ($\unit{ms}$)},
    y label style={at={(-0.14,0.5)}},
    xmin=1, xmax=21,
    ymin=0, ymax=65,
    grid=both,
    major grid style={draw=gray!20},
    minor grid style={draw=gray!20},
    legend columns=2,
    legend style={
        draw=gray!50,
        fill=white, 
        fill opacity=0.65, 
        text opacity=1},
    legend pos=north west,
    tick align=outside,
    mark size=1.6pt,
]
\addplot+[
    mark=o,
    color=gray
] coordinates {
(2,0.227999873459)
(3,0.366041902453)
(4,0.570749863982)
(5,0.771125080064)
(6,1.0493749287)
(7,1.27470912412)
(8,1.56520796008)
(9,1.92341697402)
(10,2.20045796596)
(11,2.4227919057)
(12,2.8007500805)
(13,3.20058292709)
(14,3.45916696824)
(15,3.81908402778)
(16,4.26570791751)
(17,4.61683305912)
(18,5.02733304165)
(19,5.38629083894)
(20,5.84991695359)
};
\addlegendentry{$N_s=2$}

\addplot+[
    color=orange!85!black,
    mark=square,
] coordinates {
(2,0.422124983743)
(3,0.886583002284)
(4,1.37845799327)
(5,1.9482919015)
(6,2.44029215537)
(7,2.99241719767)
(8,3.60345817171)
(9,4.16570785455)
(10,4.79987519793)
(11,5.4685829673)
(12,6.27620797604)
(13,7.04066711478)
(14,7.62145896442)
(15,8.53745802306)
(16,9.44566703402)
(17,10.3345829993)
(18,11.2290410325)
(19,12.2697080951)
(20,13.3132499177)
};
\addlegendentry{$N_s=4$}

\addplot+[
    color=green!55!black,
    mark=triangle,
] coordinates {
(2,0.866333022714)
(3,1.84879195876)
(4,3.85404122062)
(5,3.67937516421)
(6,4.64491685852)
(7,5.80425001681)
(8,6.78083300591)
(9,7.86437513307)
(10,9.04729100876)
(11,10.3520830162)
(12,11.667124927)
(13,13.0221250001)
(14,15.0019580033)
(15,17.0287501533)
(16,18.8012919389)
(17,20.4689998645)
(18,21.9787498936)
(19,23.9378330298)
(20,25.9002079256)
};
\addlegendentry{$N_s=8$}

\addplot+[
    color=red!70!black,
    mark=diamond,
] coordinates {
(2,1.85583299026)
(3,3.87933291495)
(4,7.70154199563)
(5,10.0214171689)
(6,9.23862494528)
(7,11.142665986)
(8,15.4964171816)
(9,18.142499961)
(10,20.6991669256)
(11,20.5556249712)
(12,26.8049580045)
(13,29.972959077)
(14,34.6165420488)
(15,38.1549589802)
(16,41.8325420469)
(17,45.4274159856)
(18,48.9397500642)
(19,53.6752918269)
(20,57.8358341008)
};
\addlegendentry{$N_s=16$}

\end{axis}
\end{tikzpicture}
\caption{Solve time (average over 20 runs) against prediction horizon 
for the Bernstein-tightened sampled-data fractional optimal control 
problems.}
\label{fig:runtime_vs_horizon}
\end{figure}

Fig.~\ref{fig:runtime_vs_horizon} shows that the solve times grow roughly 
linearly with the horizon length for fixed $N_s$, and roughly proportionally 
with the number of subintervals. 
Note that even with the tightest formulation with $N_s=16$
the solve times are below  $\unit[60]{ms}$ at $N=20$.

In addition, we are interested in how the computation times 
scale with the number of states, $n$. To this end, 
we benchmark against systems where $A \in \R^{n\times n}$ 
is a tridiagonal 
matrix,
\begin{equation}
    A = \begin{bmatrix}
       -1 & 0.2\\
        0.2 & -1 & 0.2 \\
        &\ddots&\ddots&\ddots \\ 
        &&0.2 & -1 & 0.2 \\
        &&&0.2 & -1 
    \end{bmatrix},
\end{equation}
and $B = [1 ~ 0 ~ \ldots ~ 0 ~ 1]^\intercal$, subject to the 
state constraints $\|x(t)\|_{\infty} \leq 2$ and 
$|u_k| \leq 8$. We set $\alpha = 0.75$,
$h=0.2$, $N_s=8$, $q=10$, and use the cost function in 
\eqref{eq:total_cost_example}.
For values of $n$ between 2 and 40 and horizons $N=10$ and $N=20$
we solve 1000 problems with the initial condition, $x(0)$, being 
sampled randomly uniformly from the feasible set of the corresponding 
optimization problem. The average runtimes together with the 
observed 2.5\%- and 95.5\%-quantiles are shown in 
Figure \ref{fig:runtimes_vs_n}. We observe that solve times increase 
roughly linearly with $n$ for fixed $N$, $N_s$, and $q$.

\begin{figure}
    \centering
    \tikzsetnextfilename{runtime-vs-state-dimension}
\begin{tikzpicture}
\begin{axis}[
    width=\PlotWidth,
    height=\MediumPlotHeight,
    xlabel={Number of states, $n$},
    ylabel={Solve time ($\unit{ms}$)},
    y label style={at={(-0.14,0.5)}},
    xmin=1, xmax=40,
    ymin=0, ymax=300,
    grid=both,
    major grid style={draw=gray!20},
    minor grid style={draw=gray!20},
    legend columns=1,
    legend style={
        draw=gray!50,
        fill=white,
        fill opacity=0.65,
        text opacity=1},
    legend pos=north west,
    tick align=outside,
    mark size=1.6pt,
]

\addplot+[
    name path=ci10upper,
    draw=none,
    mark=none,
    forget plot,
] coordinates {
(2,6.49949096)
(4,15.88834617)
(6,23.69362883)
(8,30.36852675)
(10,31.58789969)
(12,37.70733363)
(14,43.70298478)
(16,50.41458782)
(18,56.66126564)
(20,63.3638847)
(25,78.4814899)
(30,93.44876663)
(35,107.8185298)
(40,100.0509028)
% (45,123.5417566)
% (50,140.6198052)
};
\addplot+[
    name path=ci10lower,
    draw=none,
    mark=none,
    forget plot,
] coordinates {
(2,1.573397247)
(4,3.115175408)
(6,4.67782675)
(8,6.195818429)
(10,7.789244797)
(12,9.327317388)
(14,10.94757253)
(16,12.63607255)
(18,14.25488784)
(20,15.92298855)
(25,38.04331251)
(30,45.74253895)
(35,53.48425141)
(40,60.74433165)
% (45,62.40442845)
% (50,68.90167115)
};
\addplot+[
    draw=none,
    fill=orange!35,
    fill opacity=0.45,
    mark=none,
    forget plot,
] fill between[
    of=ci10upper and ci10lower,
];
\addplot+[
    color=orange!85!black,
    mark=square,
] coordinates {
(2,5.107149892)
(4,11.06344301)
(6,16.42574868)
(8,20.96268808)
(10,25.3577829)
(12,30.04975795)
(14,34.84084109)
(16,39.71724732)
(18,44.91171743)
(20,49.32903392)
(25,59.52909267)
(30,69.84594835)
(35,79.48826134)
(40,90.3741082)
% (45,96.13458241)
% (50,105.1865861)
};
\addlegendentry{$N=10$}

\addplot+[
    name path=ci20upper,
    draw=none,
    mark=none,
    forget plot,
] coordinates {
(2,18.44618125)
(4,44.91645037)
(6,66.1467229)
(8,82.37436321)
(10,84.18442363)
(12,100.9293455)
(14,117.9010198)
(16,135.1263854)
(18,151.9397955)
(20,169.1863281)
(25,209.9725153)
(30,247.4905729)
(35,233.8647309)
(40,266.8510062)
% (45,310.5768239)
% (50,377.2014667)
};
\addplot+[
    name path=ci20lower,
    draw=none,
    mark=none,
    forget plot,
] coordinates {
(2,4.404548618)
(4,8.544985737)
(6,12.78819967)
(8,16.41495449)
(10,20.56346291)
(12,24.75089961)
(14,28.93183679)
(16,33.07747916)
(18,37.47227707)
(20,41.88955898)
(25,101.0605906)
(30,120.5399806)
(35,141.4815865)
(40,161.5524333)
% (45,180.2981166)
% (50,185.882299)
};
\addplot+[
    draw=none,
    fill=red!25,
    fill opacity=0.45,
    mark=none,
    forget plot,
] fill between[
    of=ci20upper and ci20lower,
];
\addplot+[
    color=red!70!black,
    mark=diamond,
] coordinates {
(2,14.70170979)
(4,32.39951251)
(6,47.29163471)
(8,57.82333925)
(10,70.04572514)
(12,82.80066617)
(14,95.75295571)
(16,108.8454656)
(18,121.99544)
(20,132.6044243)
(25,159.9258411)
(30,187.0251058)
(35,216.29218)
(40,247.9923693)
% (45,276.1010639)
% (50,295.6914859)
};
\addlegendentry{$N=20$}

\end{axis}
\end{tikzpicture}
    \caption{Solve times against number of states. 
    (Solid lines) average runtimes over 1000 random feasible initial 
    conditions. (Shaded areas) 2.5\%- and 95.5\%-quantiles of the 
    recorded solve times.}
    \label{fig:runtimes_vs_n}
\end{figure}

\subsection{Sampled-data fractional optimal control}
The set of feasible initial states for the approximate optimal 
control problem with $N_s$ subintervals is
$\F_{N_s}^{x_0}
\dfn
\{x_0 : \exists u_0,\ldots,u_{N-1} \text{ with } (x_0,u_0,\ldots,u_{N-1})\in \F_{N_s}\},$
which is the projection of $\F_{N_s}$ onto $x_0$.
Fig.~\ref{fig:phase_profile_trajectories_feasibility} shows the 
feasible sets, $\F_{N_s}^{x_0}$, with $N_s=16$ and $N_s = 2$,
for a horizon of $N=20$.
The figure also shows two optimal state trajectories initialized 
at extreme points of $\F_{16}^{x_0}$ and demonstrates 
that the state constraints are satisfied in continuous 
time. 
The set $\F_{16}^{x_0}$ is slightly larger than $\F_{2}^{x_0}$, 
illustrating the reduction of conservatism as the number 
of subintervals increases. 
The continuous-time trajectories remain inside the state constraint set, 
$\mathcal{X}$, over the prediction horizon.
The optimal control signal is shown in 
Fig. \ref{fig:phase_profile_trajectories_feasibility:inputs}, which satisfies the input bounds.

\begin{figure}[ht]
    \centering
    \input{tikz/extreme_vertex_phase_q10_Ns16.tex}
    \caption{Two optimal state trajectories over a horizon $N=20$, feasible sets $\F^{x_0}_{2} \subseteq \F^{x_0}_{16}$, and the set of state constraints $\mathcal{X}$.}
    \label{fig:phase_profile_trajectories_feasibility}
\end{figure}

\begin{figure}[ht]
    \centering
    \tikzsetnextfilename{extreme-vertex-inputs}
\begin{tikzpicture}
\begin{axis}[
    width=\PlotWidth,
    height=\MediumPlotHeight,
    xmin=0,
    xmax=4,
    ymax=10,
    xlabel={$t$},
    ylabel={$u(t)$},
    tick align=outside,
    grid=both,
    major grid style={draw=gray!20},
    minor grid style={draw=gray!20},
    legend pos=north east,
    legend style={draw=black!70, fill=white, fill opacity=0.82, text opacity=1, font=\footnotesize},
    legend cell align=left,
]

% MAXIMUM u
\addplot [red!75!black, dashed, forget plot] coordinates {(0,8) (4,8)};
\node[anchor=north east, red!75!black] at (axis cs: 4, 8) {$u_{\rm max}$};

% \addplot [black, const plot, line width=1pt] coordinates {
%     (0,0.4960544796) 
%     (0.2,2.947374342) 
%     (0.4,1.292711291) 
%     (0.6,0.7976798111) 
%     (0.8,0.5942363084) 
%     (1,0.4828452512) 
%     (1.2,0.4098067316) 
%     (1.4,0.3571980132) 
%     (1.6,0.3172863642) 
%     (1.8,0.2859610959) 
%     (2,0.2607431693) 
%     (2.2,0.2400290773) 
%     (2.4,0.2227352215) 
%     (2.6,0.2081015056) 
%     (2.8,0.1955615688) 
%     (3,0.1846226342) 
%     (3.2,0.1747316771) 
%     (3.4,0.1655501415) 
%     (3.6,0.1627543768) 
%     (3.8,0.3210065394) 
%     (4,0.3210065394) 
% };
% \addlegendentry{Trajectory 1}

% -- This is like step() in MATLAB
\addplot [black, const plot, line width=1pt] coordinates {
    (0,7.999999995) 
    (0.2,0.6284966202) 
    (0.4,0.1035310213) 
    (0.6,0.1382997839) 
    (0.8,0.1977133291) 
    (1,0.2257055841) 
    (1.2,0.2327086773) 
    (1.4,0.2296597528) 
    (1.6,0.2224058831) 
    (1.8,0.2136651653) 
    (2,0.204644967) 
    (2.2,0.1958641617) 
    (2.4,0.1875295344) 
    (2.6,0.1797084015) 
    (2.8,0.1724177033) 
    (3,0.1656853526) 
    (3.2,0.1595531107) 
    (3.4,0.1534305044) 
    (3.6,0.1396993834) 
    (3.8,-0.009081673459) 
    (4,-0.009081673459) 
};
% \addlegendentry{Trajectory 2}

% \addplot [black!60, dashed, forget plot] coordinates {(0,-8) (4,-8)};

\end{axis}
\end{tikzpicture}
    \caption{Piecewise constant optimal control actions 
    corresponding to Fig. 
    \ref{fig:phase_profile_trajectories_feasibility} with 
    $x(0)$ being at the lower-left extreme point of 
    $\F_{16}^{x_0}$.}
    \label{fig:phase_profile_trajectories_feasibility:inputs}
\end{figure}

Furthermore, for the initial state 
$x(0)=(-0.55,-1.125)$, 
enforcing the state constraints only at the sampling instants 
yields a trajectory that violates the continuous-time constraints 
between samples. 
In particular, at $t^*\approx 0.065$ we obtain 
$x(t^*) {}\approx{} (-0.628,-0.241) \notin \mathcal{X}$. 
Since $x(0)\notin \F^{x_0}_{16}$, this point is 
infeasible for the tightened problem.
Likewise, starting from $x(0)=(-0.45947,-1.46975)$, 
which is an extreme point of $\F_{16}^{x_0}$ and applying
state constraints only at the discrete sampling time instants,
a violation is observed at $t^*\approx 0.059$,
where $x(t^*) {}\approx{} (-0.55486,-0.31061)\notin \mathcal{X}$.
Following \cite{SopasakisSarimveis2017}, if discretize 
the system dynamics using the Gr{\"u}nwald-Letnikov 
derivative and impose the constraints at discrete
times only, we obtain an optimal sequence of control actions
which leads to violations in the inter-sample intervals;
in particular, at $t^*\approx 0.06$, we have
$x(t^*)\approx(-0.55540, -0.30958) \notin \mathcal{X}$.

\section{Conclusions}
We proposed a tractable formulation of sampled-data optimal control for
linear fractional-order systems under continuous-time state constraints.
A key property was that $x(t)$ can be represented in a way---cf. 
\eqref{eq:key-system-representation}---
where the smooth and nonsmooth parts of the fractional response 
are separated.
Subsequently, we used Bernstein polynomials to construct 
polytopic approximations of the continuous-time state trajectories.
We showed that the conservatism of the proposed relaxation 
can be controlled by subdividing each sampling interval.
We further showed that under a Slater-type condition, as the number of 
subintervals goes to infinity, the approximate feasible sets converge 
to the exact semi-infinite feasible set in the Painlev{\'e}-Kuratowski sense.

The numerical examples support the theoretical findings. We have shown
that subdivision is the dominant mechanism for reducing
conservatism, while runtime scales gracefully with $N_s$, $N$, and $n$, 
and the tightened constraints enforce continuous-time feasibility.

Future work will focus on extensions to nonlinear 
fractional\hyp{}order systems, and on the design of model 
predictive controllers for sampled-data fractional-order 
linear and nonlinear systems.

\bibliographystyle{unsrtnat}
\bibliography{bibliography}
\end{document}